\title{Separation profiles of graphs of fractals}
\author{Valeriia Gladkova, Verna Shum}
\newtheorem{lemma}{Lemma}
\newtheorem{corollary}{Corollary}[lemma]
\newtheorem{prop}{Proposition}
\newenvironment{claim}[1]{\par\noindent\leftskip=0.7cm\textbf{Claim.}\space#1}{}
\newenvironment{claimproof}[1]{\par\noindent\leftskip=0.7cm\textit{Proof.}\space#1}
\begin{document}

\maketitle
\begin{abstract}
    We continue the exploration of the relationship between conformal dimension and the separation profile by computing the separation of families of spheres in hyperbolic graphs whose boundaries are standard Sierpi\'{n}ski carpets and Menger sponges. In all cases, we show that the separation of these spheres is $n^{\frac{d-1}{d}}$ for some $d$ which is strictly smaller than the conformal dimension, in contrast to the case of rank 1 symmetric spaces of dimension $\geq 3$. The value of $d$ obtained naturally corresponds to a previously known lower bound on the conformal dimension of the associated fractal.
\end{abstract}

\section{Introduction}
The separation profile -- originally considered by Lipton-Tarjan for planar graphs \cite{LipTar} and then studied more generally by Benjamini-Schramm-Tim\'{a}r \cite{BenSchTim} -- is a function which measures how well connected finite subgraphs of a given size can be. Formally, for a finite graph $G$, let $L(G)$ be the largest component of $G$ and define
$$ cut^{\epsilon}(G) = \min\{|S| : |L(G-S)| \leq \epsilon |G|\} $$
Then for an infinite graph $X$, and any $\epsilon\in(0,1)$, the $\epsilon$\textit{-separation profile} is defined as
$$ sep^{\epsilon}_X(n) = \sup\{cut^{\epsilon}(G) :  G \leq X, |G| \leq n \} $$

We will write $f(x) \preceq g(x)$ for $f(x) = \mathcal{O}(g(x))$ and $f(x) \asymp g(x)$ for $f(x) = \Theta (g(x))$ (in other words, $g(x) \preceq f(x) \preceq g(x)$). It is an easy exercise to show that $\epsilon$-separation profiles do not depend on the choice of $\epsilon\in(0,1)$ up to $\asymp$.

The modern motivation for studying separation comes from geometric group theory, since separation is a rare example of an invariant which behaves monotonically with respect to subgroups: given any finitely generated group $G$ and a finitely generated subgroup $H$ of $G$, we have $sep_H \preceq sep_G$. For hyperbolic groups, Hume-Mackay-Tessera prove that the separation profile can be bounded from above by $C_\epsilon n^{\frac{Q-1}{Q}+\epsilon}$ where $Q$ is the equivariant conformal dimension of the boundary of the group and $\epsilon$ is any positive real \cite{HMT_Poincareprofiles}, so lower bounds on the separation profile give lower bounds on the conformal dimension of the boundary. In the particular case of rank 1 symmetric spaces\footnote{except the real hyperbolic plane} they prove that the separation profile is exactly $n^{\frac{Q-1}{Q}}$ and the lower bound is attained both by metric balls, and by annuli of thickness $1$. The same lower bound can also be proved for any hyperbolic group whose boundary satisfies a $(1-1)$--Poincar\'{e} inequality \cite[Theorem 11.1]{HMT_Poincareprofiles}.

The goal of this paper is to prove that there are hyperbolic graphs whose boundaries have conformal dimension $Q>1$ where spheres do not have separation profile $n^{\frac{Q-1}{Q}}$. The examples we consider are hyperbolic cones in the sense of \cite{BonkSchramm} over the standard ``middle-thirds'' Sierpi\'{n}ski carpet $\mathcal S$ and the Menger sponge $\mathcal M$. These are interesting choices for two natural reasons:
\begin{itemize}
\item they do not satisfy a Poincar\'{e} inequality, see for instance \cite{Hein,HK}, \item in Gromov's density model a random group at density $0<d<\frac12$ is almost surely hyperbolic with boundary homeomorphic to the Menger sponge \cite{Gr_rand,DGP_split}.
\end{itemize}

For clarity, we first consider the case of the Sierpinski carpet.

Let $\mathcal S_0=[0,1]^2$. Each $\mathcal S_k$ is the union of $8^k$ squares of the form $[a3^{-k},(a+1)3^{-k}]\times [b3^{-k},(b+1)3^{-k}]$ with $a,b\in\{0,\ldots,3^k-1\}$. We obtain $\mathcal S_{k+1}$ by dividing each of these squares into 9 equal squares of side length $3^{-(k+1)}$ and removing the interior of the central one. We define $\mathcal S=\bigcap_{k\geq 0} \mathcal{S}_k$. 

For convenience we will write non-negative integers in base 3, so for each $x\in\mathbb N$ we define $x_i\in\{0,1,2\}$ so that $x=\sum_{i\geq 0} x_i3^i$. Now define $\Gamma_k$ to be the graph with vertex set
$$V_k=\{(x,y) \in \mathbb{Z}^2 \cap [0, 3^k)^{2}:\nexists \ i \text{ s.t. } x_i = y_i = 1\}$$
and edges $(x,y)(x',y')$ whenever $|x-x'|+|y-y'|=1$.

\begin{figure}[ht!]
\begin{subfigure}{0.5\textwidth}
\caption{$\Gamma_0$}
\includegraphics[width=\textwidth]{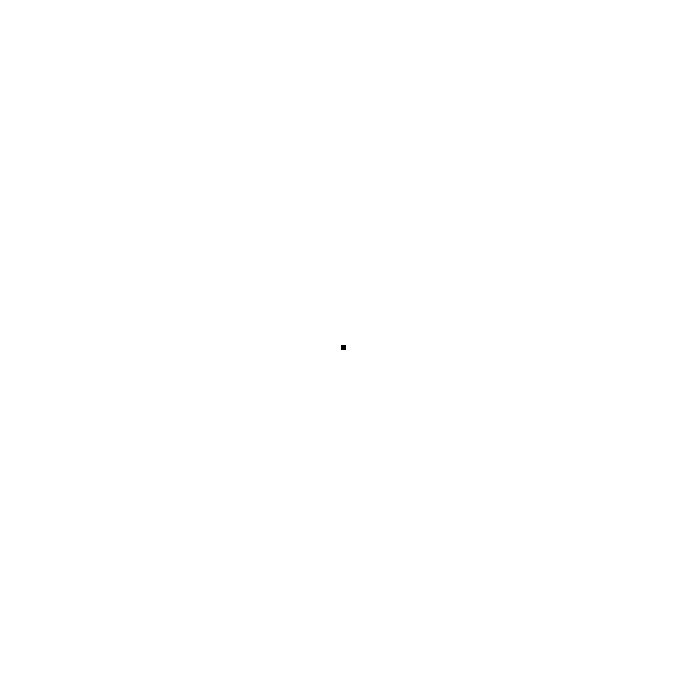}
\end{subfigure}
\begin{subfigure}{0.5\textwidth}
\caption{$\Gamma_1$}
\includegraphics[width=\textwidth]{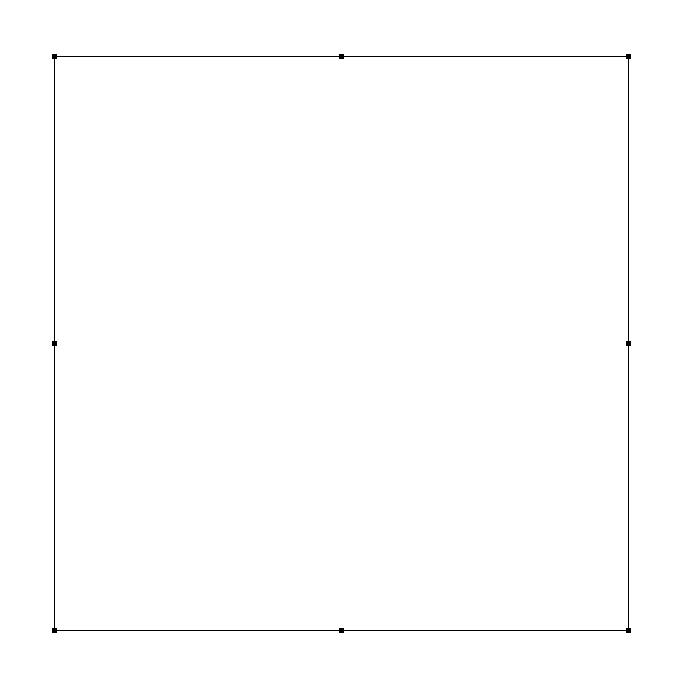}
\end{subfigure}
\begin{subfigure}{0.5\textwidth}
\caption{$\Gamma_2$}
\includegraphics[width=\textwidth]{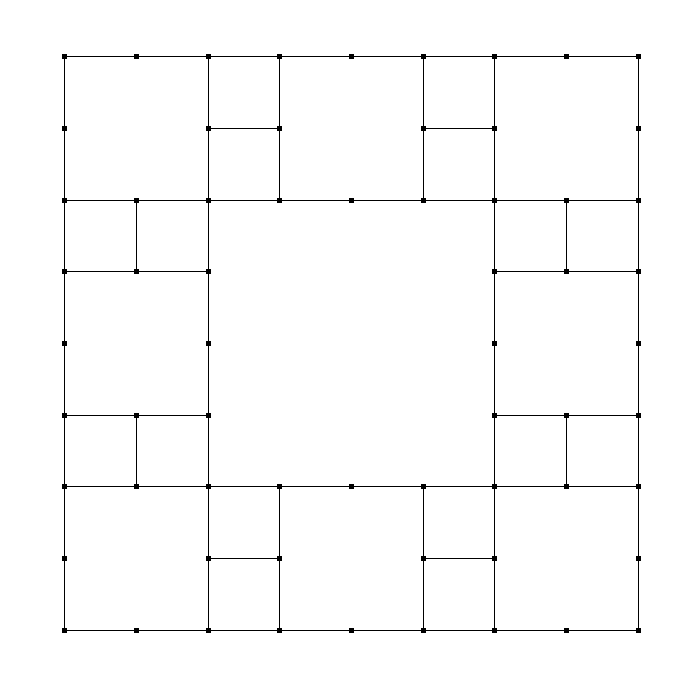}
\end{subfigure}
\begin{subfigure}{0.5\textwidth}
\caption{$\Gamma_3$}
\includegraphics[width=\textwidth]{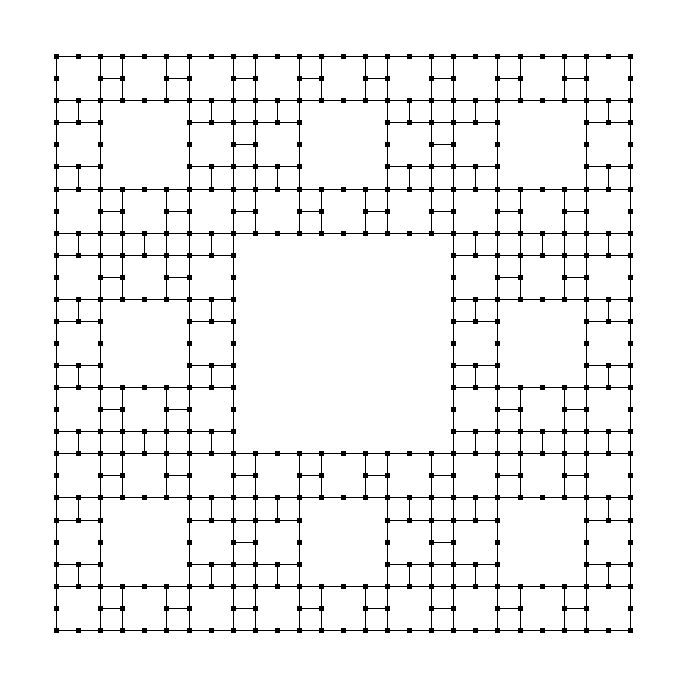}
\end{subfigure}
\caption{The first 4 iterations of the Sierpi\'{n}ski carpet}
\label{fig:its}
\end{figure}

We construct a hyperbolic graph $\Gamma$ from $\bigsqcup \Gamma_k$ by connecting each $(x,y)$ in $V_{k-1}$ to every $(x',y')\in V_k$ such that $x_i=x'_i$ and $y_i=y'_i$ for all $0\leq i < k-1$. We have $\partial_\infty\Gamma=\mathcal S$, and the spheres of $\Gamma$ centred at the unique vertex in $V_0$ are simply the graphs $\Gamma_k$. Note that the graphs $\Gamma_k$ are nested, so to study the separation of these spheres it suffices to work with $S=\bigcup_{k\geq 0} \Gamma_k$.

\begin{restatable}{theorem}{sierpinski}
\label{thm:sierpinski}
In each $\Gamma_k$ the ``subgraph of complete lines'' (cf.\ Figure \ref{fig:complete}) $C_k$ has $2\cdot 6^k-4^k$ vertices and $cut^{1/2}(C_k) \geq 2^{k-1}$. Moreover, this is optimal as $sep_S(r)\asymp n^{\log(2)/\log(6)}$.
\end{restatable}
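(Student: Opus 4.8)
The plan is to prove the two halves of the statement separately: the explicit computation for the extremal family $C_k$, which yields the lower bound on $sep_S$, and a matching upper bound valid for all finite subgraphs of $S$. The vertex count is immediate: a horizontal line $\{y=y_0\}$ is complete exactly when $y_0$ has no base-$3$ digit equal to $1$, giving $2^k$ complete horizontal lines, each with $3^k$ vertices, and symmetrically $2^k$ complete vertical lines; the two families meet precisely in the $2^k\cdot 2^k=4^k$ points whose coordinates both avoid the digit $1$, so $|C_k|=2\cdot 3^k\cdot 2^k-4^k=2\cdot 6^k-4^k$.

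For the lower bound $cut^{1/2}(C_k)\geq 2^{k-1}$ I would argue by contradiction. Write $M=2^k$ for the number of complete lines in each direction, and suppose a set $T$ with $|T|<2^{k-1}=M/2$ is removed. Since every vertex of $C_k$ lies on at most one horizontal and at most one vertical complete line, the number of complete lines meeting $T$ is at most $2|T|<M$, so if $p$ and $q$ denote the numbers of horizontal and vertical complete lines disjoint from $T$ then $p+q>M$ (and in particular $p,q\geq 1$). Each surviving line is an intact path, and any surviving horizontal line meets any surviving vertical line in a crossing point which also survives; hence all $p+q$ surviving lines lie in a single component of $C_k-T$. Its size is at least the size of their union, namely $(p+q)3^k-pq$ by inclusion-exclusion. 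Minimising this over $p+q>M$, $0<p,q\leq M$ (the minimum being approached at $p=q=M/2$) gives a component of size $>M\cdot 3^k-M^2/4=6^k-4^k/4>6^k-4^k/2=|C_k|/2$, contradicting the separation requirement. Taking $G=C_k\leq S$ with $k$ maximal subject to $|C_k|\leq n$, so that $6^k\asymp n$, then yields $sep_S(n)\geq 2^{k-1}\asymp n^{\log 2/\log 6}$.

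For the upper bound the cited result of Hume--Mackay--Tessera only gives the weaker exponent $\frac{Q-1}{Q}$ coming from the conformal dimension, so a bespoke argument using the geometry of $S$ is needed. I would reduce $sep_S(n)\preceq n^{\log 2/\log 6}$, via the standard recursion, to showing that every finite $G\leq S$ admits a balanced separator---splitting $G$ into pieces of size at most $\tfrac23|G|$---of size $\preceq|G|^{\log 2/\log 6}$. The tool is the self-similar decomposition of $S$ into eight scaled copies together with the observation underlying the lower bound: a \emph{threading} line, one whose fixed coordinate is the all-ones number $(3^{j}-1)/2$, meets a scale-$j$ box in only $2^{j}$ vertices, because that coordinate forbids the digit $1$ in the other coordinate at every position. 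These threading lines provide a hierarchy of cheap, roughly balanced cuts at every scale and location, and one separates a given $G$ by recursively cutting along the threading line whose scale and orientation best balance the mass of $G$, summing the costs as a geometric series.

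The main obstacle is this accumulation analysis. For a subgraph concentrated on complete lines the threading cut is forced to have size $\asymp|G|^{\log 2/\log 6}$ and is optimal, but a subgraph spread thinly across many scales could a priori force repeated moderately expensive cuts whose total exceeds $|G|^{\log 2/\log 6}$. The crux is therefore to show that $C_k$ is genuinely the extremal configuration: one must control, scale by scale, how much of $G$ can accumulate near the available threading lines, and verify that the adaptive recursion always terminates with total cost of order $|G|^{\log 2/\log 6}$ rather than a larger power. I expect this balancing estimate, rather than either the vertex count or the lower bound, to be where the real work lies.
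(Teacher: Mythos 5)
Your vertex count is correct, and your lower bound is correct and is essentially the paper's Lemma~\ref{complete_lemma}: there too, fewer than $2^{k-1}$ deletions leave more than half of the complete lines in each direction untouched, and the survivors, pairwise crossing, form a single component of size greater than $\tfrac{1}{2}|C_k|$. Your inclusion--exclusion computation $(p+q)3^k-pq$ with $p+q>2^k$ merely makes explicit the size estimate that the paper asserts in one line, so this half of the theorem is fine.

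The genuine gap is the upper bound, which is the other half of the theorem: you stop at a sketch and explicitly defer the key estimate (your ``accumulation analysis''). In fact that anticipated difficulty does not exist, and recognising why is exactly the missing idea. The cost of a threading line is bounded \emph{absolutely}, not in terms of $\Gamma$: if the last $k$ base-$3$ digits of $p$ are all $1$, then $(p,y)\in S$ forces the last $k$ digits of $y$ to lie in $\{0,2\}$, so $V_p$ meets $S$ itself --- hence every subgraph --- in at most $3\cdot 2^k$ vertices of any square of side $3^{k+1}$. Consequently no subgraph can ``accumulate'' on the available cut lines, and the extremality of $C_k$ never has to be established in the upper-bound argument. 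The second missing ingredient is how balance is achieved: not by optimising over threading lines, but by anchoring them at a median. The paper's Lemma~\ref{lemma:square} takes the median point $(m_x,m_y)$ of $\Gamma$ and cuts along the four threading lines immediately straddling the medians; such lines exist within distance $3^k$ because lines of the required form recur with period $3^k$, so each outer component automatically has at most $\tfrac12|\Gamma|$ vertices while the middle one is confined to a square of side $3^k$, and induction gives total cost $12\cdot 2^{k+1}$ --- the geometric series you hoped for is automatic, with no adaptive choices to analyse. A preliminary reduction (the paper's Lemma~\ref{lemma:pre-square}), absent from your sketch, is also needed: for an arbitrary subgraph on $\asymp 6^k$ vertices, a pigeonhole over the $\asymp 3^k$ candidate lines within distance $\tfrac{3^k}{2}$ of the median yields four straight lines each meeting $\Gamma$ in $\preceq 2^k$ vertices, confining the large component to a $3^k$-square before the threading-line induction begins. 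Note finally that your plan of cutting along a single best-balancing threading line per step would also require matching the scale of the line to the diameter of the current piece: a line whose all-ones block has length $j$ can meet a region of side $3^K$ in $\asymp 2^j 3^{K-j}$ vertices, which for $j<K$ dwarfs $2^K$; the median-plus-confinement scheme handles this matching automatically, and without it your recursion would not close.
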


The conformal dimension of a metric space $X$, originally introduced by Pansu \cite{Pansu_confdim}, is the infimal Hausdorff dimension of all spaces quasi--symmetric to $X$. The exact conformal dimension of the Sierpinski carpet is still unknown.

In $\mathcal S$ the corresponding ``subspace of complete lines'' is $\mathcal C\times [0,1]\cup [0,1]\times\mathcal C$ where $\mathcal C$ is the middle-thirds Cantor set. This space is known to have conformal dimension exactly $Q=1+\log(2)/\log(3)$ by work of Bishop--Tyson \cite{BiTy} and 
$$\frac{\log(2)}{\log(6)} = \frac{Q - 1}{Q}.$$ 
It is a very recent result of Kwapisz \cite{Kwapisz_cdim} that the conformal dimension of $\mathcal S$ is strictly greater than $1+\log(2)/\log(3)$. 

We can change many parameters to obtain a wider collection of boundaries. Let $d\geq 2$, $b\geq 3$, $A\subseteq [0,b-1]\cap\mathbb Z$ and $0\leq m\leq d$. For each $k$ define $\Gamma_k(d,b,A,m)$ to be the graph with vertex set
$$V_k = \big\{(x_1,...,x_d) \in \mathbb{Z}^d \cap [0, b^k)^{d}:\forall j \;\; \big|\{i: x_{i,j} \in A \}\big| \leq m \big\}$$
where $x_i = \sum\limits_{j=0}^{k-1} x_{i,j} \cdot b^{j}$, $A \subseteq [0,b-1]$ and $0 \leq m \leq d$. In other words, we are now working with points in $\mathbb Z^d$ such that at most $m$ coordinates have a digit from $A$ at the same position in base $b$. As before, we may build a hyperbolic graph $\Gamma(d,b,A,m)$ with spheres $\Gamma_k(d,b,A,m)$ and fractal boundary $\mathcal{S}(d,b,A,m)$. Notice that $\mathcal{S}(2,3,\{1\},1)$ is the Sierpi\'{n}ski carpet and $\mathcal{S}(3,3,\{1\},1)$ is the standard Menger sponge. The spheres $\Gamma_k(d,b,A,m)$ are again nested so we define
$$S(d, b, A, m) = \bigcup\limits_{k=0}^{\infty} \Gamma_k(d,b,A,m).$$

Our most general theorem, from which Theorem \ref{thm:sierpinski} follows immediately is the following.

\begin{restatable}{theorem}{general}
\label{thm:general}
Let $N = \sum\limits_{i=0}^{m-1} \binom{d-1}{i} |A|^i (b-|A|)^{d-i-1}$ and $E = \frac{\log(N)}{\log(N)+\log(b)} $. Then $$sep_{S(d,A,b,m)}(n) \preceq n^E$$
Moreover, for $m=1$, this is actually a $\asymp$-equality
$$sep_{S(d,A,b,1)}(n) \asymp n^E$$
with $E = \frac{(d-1) \log(b-|A|)}{(d-1) \log(b-|A|) + \log(b)}$.
\end{restatable}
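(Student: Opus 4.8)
The plan is to treat the two assertions separately: the upper bound $sep_S(n)\preceq n^E$ for all $m$, and the matching lower bound for $m=1$. Throughout I would write $D=1+\log_b N$, so that $E=\frac{D-1}{D}$; the slogan is that $S$ separates like a $D$--dimensional object even though its growth dimension $\log_b M$, with $M=\sum_{i=0}^{m}\binom{d}{i}|A|^i(b-|A|)^{d-i}$, is strictly larger. This gap is exactly the failure of a Poincar\'e inequality, so neither bound can come from a soft isoperimetric estimate; both must exploit the concrete self-similar description $V_k=\{\vec x:\ \forall j,\ |\{i:x_{i,j}\in A\}|\le m\}$.

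For the upper bound I would first reduce to a single-scale statement via the standard recursive-separator theorem: since profiles are independent of $\epsilon$ up to $\asymp$, it suffices to produce, for every finite $H\le S$ with $|H|=n$, one set $T\subseteq H$ with $|T|\lesssim n^E$ whose removal leaves all components of size $\le\frac{2}{3}n$ (recursing only on pieces of size $>\epsilon n$ keeps the number of pieces and the number of levels bounded, so the cut sizes sum to $\lesssim n^E$). The separator $T$ I would take to be an ``$A$--rich wall'': a coordinate hyperplane $\{x_i=t\}$ where the bottom $\ell$ base-$b$ digits of $t$ all lie in $A$. The key point is that at such a position every one of the other $d-1$ coordinates must, at each of those $\ell$ positions, spend one of its $m$ tokens on $x_i$, so the cross-section inside a scale-$\ell$ cube has size exactly $N^\ell$ rather than the full $(M')^\ell$ with $M'=\sum_{i=0}^{m}\binom{d-1}{i}|A|^i(b-|A|)^{d-1-i}$. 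I would choose $\ell$ adapted to $H$ (the coarsest scale at which $H$ is spread across several scale-$\ell$ cubes in some direction), snap the balancing hyperplane to the nearest $A$--rich position, and absorb the resulting $O(1)$-cube imbalance into the recursion. Verifying $N^\ell\lesssim n^E$ then reduces to the elementary inequality $Nb\le M$ together with the lower bound $n\gtrsim M^\ell$ coming from cube-filling at the chosen scale.

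For the lower bound with $m=1$ I would use the complete-lines subgraph $C_k$. Here $N=(b-|A|)^{d-1}$, a point lies in $C_k$ iff at most one coordinate avoids the digit set $\mathcal C^{(k)}=\{x:\text{no digit in }A\}$, so $C_k$ is the union of $d$ families of $N^k$ axis-parallel lines of length $b^k$; counting gives $|C_k|\asymp d(Nb)^k$ (and exactly $2\cdot 6^k-4^k$ in the Sierpi\'nski case). It then suffices to show $cut^{1/2}(C_k)\gtrsim N^k$, since $N^k\asymp|C_k|^E$. For $d=2$ this has a clean self-contained proof: given a balanced separator $S$, two-colour the components into nearly-equal classes $U,W$; a line meeting both colours must contain a vertex of $S$, so all but $\le|S|$ lines in each direction are monochromatic, and a monochromatic column of one colour crossing a monochromatic row of the other colour forces the (distinct) crossing vertex into $S$. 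Combining the balance of $U,W$ with these crossing constraints yields $|S|\gtrsim N^k$. For general $d$ I would identify $C_k$ as a subdivision of the Cantor grid $\big[(b-|A|)^k\big]^d$ and deduce the same bound from the codimension-one vertex-isoperimetric inequality for grids, the subdivision weights being comparable within each sheet.

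\textbf{The main obstacle} is the upper bound for arbitrary subgraphs. A single $A$--rich wall is cheap only inside one scale-$\ell$ cube, whereas a general $H$ may be spread thinly across many cubes and across many scales; controlling $|H\cap\{x_i=t\}|$ then requires choosing the cut scale and direction so that balance and the $N^\ell$-cheapness hold simultaneously, which is precisely where the combinatorics of the digit constraint (and the inequality $Nb\le M$) must be used in place of a generic separator bound. On the lower-bound side the analogous difficulty is that $C_k$ is a highly non-uniform subdivision whose mass concentrates on the long ``middle'' segments, so transferring a balanced cut of $C_k$ to a balanced cut of the underlying grid requires tracking these weights; the $d=2$ crossing argument sidesteps this, and it is the model I would try to push to higher $d$ before resorting to grid isoperimetry.
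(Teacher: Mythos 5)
Your overall architecture matches the paper's — the $A$-rich walls for the upper bound are exactly the planes $x_i=\lambda b^k+\sum_{j<k} a_j b^j$, $a_j\in A$, used in Lemma \ref{gen_upper}, and the complete-lines subgraph $C_k$ is the paper's lower-bound witness — but the verification step of your upper bound contains a genuine error. The claimed ``lower bound $n\gtrsim M^\ell$ coming from cube-filling'' is false: containment of $H$ in a single scale-$(\ell+1)$ cube gives only the \emph{upper} bound $n\le M^{\ell+1}$, and a subgraph spread across several scale-$\ell$ cubes can be far sparser than $M^\ell$. Concretely, in the carpet let $H$ be a straight segment of length $n=3^{\ell}$ along one complete line: your coarsest spread scale is $\ell-1$, your wall certificate costs $N^{\ell-1}=2^{\ell-1}$, but $n^E=2^{(\log 3/\log 6)\ell}\approx 2^{0.61\ell}$, so $N^\ell\lesssim n^E$ fails even though the true cut is $O(1)$. (Your inequality $Nb\le M$ is correct — indeed $M-bN=\binom{d-1}{m}|A|^m(b-|A|)^{d-m}\ge 0$ — but it points the wrong way: you need a lower bound on $n$, which sparse $H$ does not supply.) The paper avoids this by decoupling the two roles of the scale: it fixes $k$ by $n\asymp (bN)^k$, then in Lemma \ref{lemma:pre-square} (and its analogue inside Lemma \ref{gen_upper}) uses medians plus pigeonhole over the $\sim b^k/2$ candidate positions to find \emph{generic} planes meeting $H$ in $\preceq n/b^k\asymp N^k$ vertices each — a bound driven by $|H|$ itself, hence valid for sparse $H$ — confining the only possibly-large component to a cube of side $b^k$; only inside that cube does the recursive $A$-rich-wall argument of Lemma \ref{lemma:square} run, its cost $\preceq N^k$ being bounded by the fractal cross-section independently of the density of $H$. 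Some such two-stage device is needed; your single adapted scale cannot work as stated.

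On the lower bound, your $d=2$ crossing argument is precisely the paper's Lemma \ref{complete_lemma} and is correct, but it does not extend to $d\ge 3$ as directly as your sketch suggests: two complete lines in directions $e_i\ne e_j$ intersect only if they agree in the remaining $d-2$ coordinates, so surviving lines need not pairwise cross and ``a majority of lines survives'' does not immediately produce one large component. Your proposed fallback — viewing $C_k$ as a subdivision of the grid $[(b-|A|)^k]^d$ and invoking grid isoperimetry — founders exactly on the balance transfer you yourself flag: the subdivision weights along a line range from $O(1)$ up to $\asymp b^k$ (the gaps of the Cantor set), and are not ``comparable within each sheet,'' so a cut balanced in $|C_k|$-mass need not project to a balanced cut of the grid. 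The paper sidesteps balance altogether with a congestion argument (Proposition \ref{prop:paths}: $cut(\Gamma)\ge n^2/8m$ for any path system of maximal load $m$), building explicit paths between all pairs by coordinate replacement along complete lines — at every intermediate step $d-1$ coordinates remain $A$-free — and counting load $\preceq (b-|A|)^{(d-1)k}b^{2k}$, which yields $cut(C_k)\succeq (b-|A|)^{(d-1)k}=N^k$ with no isoperimetric input (Lemma \ref{complete_lemma_m1}). So both of your bounds rest on the right geometric objects, but the scale-selection in the upper bound and the $d\ge 3$ case of the lower bound are genuine gaps, each of which the paper closes with a different mechanism (pigeonhole pre-cut; path congestion) than the one you propose.
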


Notice that for the Menger curve situation $S(3,3,\{1\},1)$, $E=\frac{Q-1}{Q}$ where $Q= \log(12)/\log(3)$ is the conformal dimension of the copy of $\mathcal C^2\times[0,1]$ contained in the Menger curve. For the three dimensional analogue of the Sierpi\'{n}ski carpet $\mathcal S^3$ corresponding to $S(3,3,\{1\},2)$ (removing only the middle-third cube of the 27 at each iteration) we see that $E=\frac{Q-1}{Q}$ where $Q= 1+ \log(8)/\log(3)$ is the conformal dimension of the copy of $\mathcal S\times[0,1]$ in $\mathcal S^3$.

We prove that the separation profile of $S(2,3,\{1\},1)$ is $n^{\frac{\log{2}}{\log{6}}}$ in Section 3. Section 4 defines the family of generalisations of the carpet and gives the proof of the more general theorem.

\subsection{Acknowledgements}
The first author was supported by the Summer Projects Scheme of Merton College, Oxford. The second author was supported by an Undergraduate Summer Project Bursary from the University of Oxford.
The authors would like to thank David Hume for introducing them to the topic and this particular problem, for substantial contributions to the introduction of this paper and for many helpful discussions in general.

\section{Separation of the Sierpi\'{n}ski carpet}

\sierpinski*

Here $\frac{\log{2}}{\log{6}} \approx 0.38685$. The theorem will follow from Corollaries \ref{sierpinski_lower} and \ref{sierpinski_upper} below.

\subsection{Lower bound}
For the lower bound, it is sufficient to exhibit a sequence of increasing subgraphs with cut $\succeq n^{\frac{\log{2}}{\log{6}}}$.

Define vertical and horizontal lines as $V_x = \{x\} \times \mathbb{Z}$ and $H_y = \mathbb{Z} \times \{y\}$ respectively. We will say that a line $L$ (either vertical of horizontal) is \textit{complete} in $\Gamma$ if $\Gamma \cap L$ is connected.

Lemma 1 shows that the complete lines of $\Gamma_k$ form subgraphs with the right property. See Figure \ref{fig:complete} for an example of one such subgraph.

\begin{figure}[h!]
\centering
\includegraphics[width=0.7\textwidth]{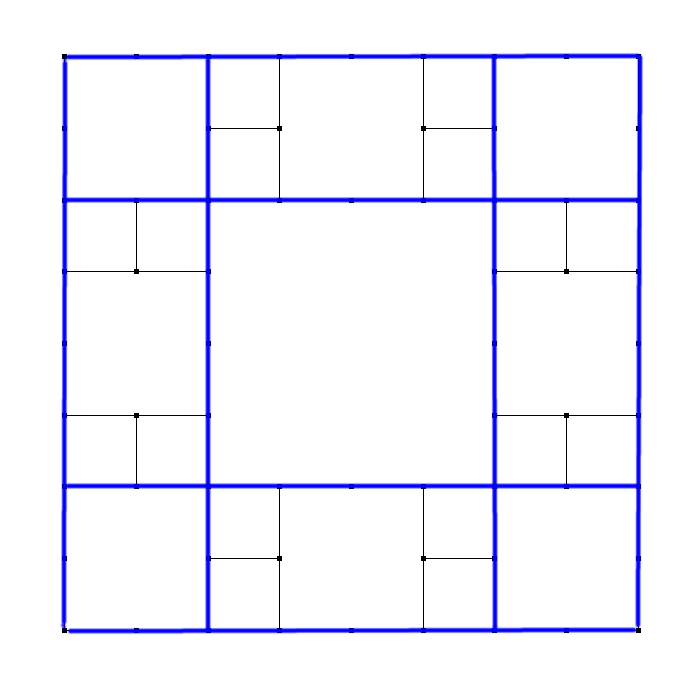}
\caption{The complete lines subgraph of $\Gamma_2$}
\label{fig:complete}
\end{figure}

\begin{lemma}
\label{complete_lemma}
Let $C_k$ be the subgraph of all complete lines in $\Gamma_k$. Then
$$ cut(C_{k}) \geq 2^{k-1} $$
\end{lemma}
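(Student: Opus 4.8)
The plan is to first pin down \emph{exactly} which lines are complete, and then to run a counting argument showing that a separator which is too small must always leave a connected component occupying more than half of $C_k$.

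\textbf{Step 1 (characterising complete lines).} A vertical line $V_x$ meets $\Gamma_k$ in the set of $y\in[0,3^k)$ with $\neg(x_i=1\wedge y_i=1)$ for every $i$. If some digit $x_i=1$, this forbids all $y$ with $y_i=1$, which punches a gap into the interval $[0,3^k)$ and disconnects the column; if no digit of $x$ equals $1$ the whole column survives. Hence $V_x$ is complete precisely when $x\in\{0,2\}^k$ in base $3$, and symmetrically for $H_y$. So $C_k$ is the union of $2^k$ full vertical lines and $2^k$ full horizontal lines, each of $3^k$ vertices, meeting in the $4^k$ points with both coordinates in $\{0,2\}^k$; inclusion--exclusion gives $|C_k|=2\cdot 2^k3^k-4^k=2\cdot 6^k-4^k$.

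\textbf{Step 2 (the separator forces many clean lines).} Fix a separator $S$ with all components of $C_k-S$ of size $\le\tfrac12|C_k|$, and call a complete line \emph{dirty} if it meets $S$. Since each vertex lies on at most one complete vertical and at most one complete horizontal line, at most $|S|$ vertical and at most $|S|$ horizontal complete lines are dirty. Suppose for contradiction that $|S|\le 2^{k-1}-1$; then at least $2^{k-1}+1$ vertical and at least $2^{k-1}+1$ horizontal complete lines are clean. Their union $U$ lies in a single component of $C_k-S$: each clean line, being complete and disjoint from $S$, remains connected, and any clean vertical line crosses any clean horizontal line in a point of $\{0,2\}^k\times\{0,2\}^k\subseteq V_k$ which, lying on two clean lines, survives. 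Thus $U$ is connected.

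\textbf{Step 3 (the arithmetic, which is the crux).} With $a,b$ the numbers of clean vertical and horizontal lines, inclusion--exclusion gives $|U|=(a+b)3^k-ab$, increasing in each of $a,b$ on the range $a,b\le 2^k<3^k$; so it suffices to verify the worst case $a=b=2^{k-1}$, where $|U|=6^k-4^k/4>6^k-4^k/2=\tfrac12|C_k|$. The component containing $U$ then exceeds half of $C_k$, contradicting the separator property, and therefore $cut^{1/2}(C_k)\ge 2^{k-1}$. The main obstacle is exactly this step: the naive estimate that retains only one family of clean lines yields a component of size merely $(2^k-|S|)3^k$ and forces only $|S|\gtrsim(4/3)^k$, far short of $2^{k-1}$. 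The gain comes entirely from counting the union of both families with the correction term $-ab$, which leaves a slack of $4^k/4$ to beat the half-threshold; keeping that correction and checking monotonicity (so that $a=b=2^{k-1}$ is genuinely the worst case) is what must be done carefully.
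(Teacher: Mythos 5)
Your proof is correct and takes essentially the same route as the paper's: both characterise the complete lines as those indexed by $1$-free coordinates, then argue that any separator of size less than $2^{k-1}$ leaves more than half of each family of lines untouched, and that the union of these clean lines is connected (via the crossing points with both coordinates $1$-free) and occupies more than half of $C_k$. Your Step 3 simply makes explicit the inclusion--exclusion count $(a+b)3^k-ab$ and the monotonicity check that the paper's proof asserts without calculation.
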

\begin{proof}
$V_x$ is complete in $\Gamma_k$ if and only if $x=\sum_{i=0}^{k-1} x_i3^i$ where each $x_i\in\{0,2\}$ - otherwise, over the distance of $3^k$, there would be at least one point whose $y$-coordinate has 1 at the same position as $x$. This gives a total of $2^k$ complete horizontal lines, and the same holds for vertical lines by symmetry. Take $T \subseteq V(C_k)$ with $|T| < 2^{k-1}$. In $C_k-T$, more than half of both vertical and horizontal lines are still complete since no vertices were removed from them. But together they form a connected component of size more than $\frac{|C_k|}{2}$. Hence $cut(C_k) \geq 2^{k-1}$.

\end{proof}

\begin{corollary}
\label{sierpinski_lower}
$$ sep_S(n) \succeq n^{\frac{\log{2}}{\log{6}}} $$
\end{corollary}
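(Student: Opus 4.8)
The plan is to feed the family $\{C_k\}_{k\geq 0}$ directly into the definition of the separation profile; all the real content is already in Lemma~\ref{complete_lemma}, and what remains is bookkeeping to convert the per-level estimate $cut(C_k)\geq 2^{k-1}$ into a bound on $sep_S$ as a function of $n$. Since the $\epsilon$-separation profiles all agree up to $\asymp$, I would work with $\epsilon=1/2$ throughout. Each $C_k$ is an induced subgraph of $\Gamma_k$, and the $\Gamma_k$ are nested subgraphs of $S$, so every $C_k\leq S$ is a legitimate finite test graph in the supremum defining $sep_S$.

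First I would pin down the size of $C_k$. From the proof of Lemma~\ref{complete_lemma} there are exactly $2^k$ complete vertical lines (the $V_x$ with every digit $x_i\in\{0,2\}$) and, by symmetry, $2^k$ complete horizontal lines; each complete line meets $\Gamma_k$ in all $3^k$ of its lattice points, contributing $2^k\cdot 3^k=6^k$ vertices in each direction. A vertex lies on both a complete vertical and a complete horizontal line exactly when both of its coordinates have all digits in $\{0,2\}$, giving $4^k$ shared vertices, so inclusion--exclusion yields $|C_k|=2\cdot 6^k-4^k$; in particular $6^k\leq |C_k|\leq 2\cdot 6^k$. This both justifies the vertex count claimed in Theorem~\ref{thm:sierpinski} and supplies the upper bound on $|C_k|$ needed to translate levels into sizes.

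With this in hand, for each (large) $n$ I would choose $k=k(n)$ maximal subject to $|C_k|\leq n$. Then $C_k$ is admissible in the supremum, so $sep_S(n)\geq cut^{1/2}(C_k)\geq 2^{k-1}$. Maximality gives $n<|C_{k+1}|\leq 2\cdot 6^{k+1}=12\cdot 6^k$, hence $6^k> n/12$, and using the identity $2^k=(6^k)^{\log 2/\log 6}$ this becomes $sep_S(n)\geq \tfrac12\,(n/12)^{\log 2/\log 6}$, which is exactly $sep_S(n)\succeq n^{\log 2/\log 6}$. The only points requiring any care are the exponent identity $2^k=(6^k)^{\log 2/\log 6}$ (which is where the exponent $\log 2/\log 6$ is born) and absorbing the constants $12$ and $\tfrac12$ into $\succeq$; there is no genuine obstacle here, since the statement is just Lemma~\ref{complete_lemma} read through the definition of the profile, and the boundary values of small $k$ are irrelevant to the asymptotic bound.
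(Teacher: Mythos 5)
Your proposal is correct and is essentially the paper's own argument: the paper's proof of Corollary~\ref{sierpinski_lower} is the one-line observation that $|C_k| = 2\cdot 6^k - 4^k \asymp 6^k$, combined with Lemma~\ref{complete_lemma}, and you have simply spelled out the same inclusion--exclusion count and the routine conversion of the per-level bound $cut^{1/2}(C_k)\geq 2^{k-1}$ into the asymptotic statement. All the details you fill in (choosing $k$ maximal with $|C_k|\leq n$, the identity $2^k=(6^k)^{\log 2/\log 6}$, absorbing constants) are the bookkeeping the paper leaves implicit, and they check out.
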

\begin{proof}
Just note that $|C_k| = 2 \cdot 6^k - 4^k \asymp 6^k$.
\end{proof}

\noindent There is an alternative proof of this fact that is more technical but extends more easily to the general case. First, we'll need the following proposition.

\begin{prop}
\label{prop:paths}
Let $\Gamma$ be a connected graph on $n$ vertices with $cut(\Gamma) \leq \frac{n}{4}$. $\forall i, j \in \Gamma$ define a path $P_{i, j}$ from $i$ to $j$. Let $m = \max\limits_{v \in \Gamma} ({|\{(i, j): v \in P_{i, j}\}|}) $. Then 
$$cut(\Gamma) \geq \frac{n^2}{8m}$$
\end{prop}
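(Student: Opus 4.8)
The plan is to run a load-counting argument over the vertices of an optimal separator. Let $S \subseteq V(\Gamma)$ be a set realising the cut, so that $|S| = cut(\Gamma)$ and the largest component of $\Gamma - S$ has at most $n/2$ vertices. Write $C_1,\dots,C_t$ for the components of $\Gamma - S$ and $n_k = |C_k|$, so that $\sum_k n_k = n - |S|$ and each $n_k \le n/2$. The key observation is that whenever $i$ and $j$ lie in distinct components of $\Gamma - S$, any path from $i$ to $j$ in $\Gamma$ — in particular the chosen path $P_{i,j}$ — must pass through at least one vertex of $S$. Call such a pair \emph{separated}.

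First I would bound the number of separated pairs from below. The number of unordered pairs of vertices of $\Gamma - S$ lying in different components is $\tfrac12\big((n-|S|)^2 - \sum_k n_k^2\big)$, so it suffices to bound $\sum_k n_k^2$ from above subject to $\sum_k n_k = n-|S|$ and $n_k \le n/2$. By convexity, $\sum_k n_k^2$ is maximised by making the parts as large as the constraint allows, so $\sum_k n_k^2 \le (n/2)^2 + (n-|S|-n/2)^2$. Using the hypothesis $|S| = cut(\Gamma) \le n/4$, so that $n-|S| \ge 3n/4$, a short computation then gives that the number of separated pairs is at least $n^2/8$.

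Next comes the double counting. Summing the loads over the separator,
$$\sum_{v \in S} \big|\{(i,j): v \in P_{i,j}\}\big| \ \ge\ \#\{\text{separated pairs}\} \ \ge\ \frac{n^2}{8},$$
because the path of each separated pair contributes at least once to the left-hand side, meeting $S$ in at least one vertex. On the other hand every term on the left is at most $m$ by definition of $m$, so the left-hand side is at most $|S|\cdot m$. Combining these gives $|S|\cdot m \ge n^2/8$, i.e.\ $cut(\Gamma) = |S| \ge n^2/(8m)$, as claimed.

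The main obstacle is the extremal estimate in the second step: one must verify that $\sum_k n_k^2$ is genuinely maximised at the two-part configuration $(n/2,\; n-|S|-n/2)$ rather than at some configuration with more, smaller parts, and keep careful track of ordered-versus-unordered pairs so that the constant emerges as $8$ rather than $4$. The hypothesis $cut(\Gamma)\le n/4$ is exactly what is needed to force $n-|S| \ge 3n/4 > n/2$, guaranteeing at least two nonempty components and hence a positive supply of separated pairs.
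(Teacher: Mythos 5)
Your proof is correct and takes essentially the same approach as the paper's: both arguments double-count the chosen paths over an optimal cutset $S$, using that every pair lying in distinct components of $\Gamma - S$ forces its path through $S$ while each vertex of $S$ carries load at most $m$. The only difference is bookkeeping in establishing the $n^2/8$ lower bound on separated pairs --- the paper greedily splits the components into two sets $A$, $B$ with $|A|,|B| \geq n/4$ and counts the $2|A||B|$ ordered cross-pairs, whereas you bound $\sum_k n_k^2$ by convexity; both yield the same constant, and your unordered count still suffices since $m$ is defined over ordered pairs.
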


\begin{proof}
Let $C$ be a cutset with $\leq \frac{n}{4}$ vertices. The components of $G-C$ can be split into two disjoint sets $A$ and $B$ each containing $\geq \frac{n}{4}$ vertices (simply by the greedy algorithm, after ordering the components in decreasing order). Since $C$ separates $A$ and $B$, all paths from $A$ to $B$ must pass through $C$. There are $2|A||B|$ (counting both directions) such paths on the one hand, and fewer than $m|C|$ on the other. Hence
$$ |C| \geq \frac{2|A||B|}{m} \geq \frac{n^2}{8m}$$
\end{proof}

\begin{lemma}
\label{complete_lemma_alt}
Let $C_k$ be the subgraph of all complete lines in $\Gamma_k$. Then
$$ cut(C_{k}) \geq \frac{2^{k}}{160} $$
\end{lemma}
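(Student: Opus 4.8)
The plan is to apply Proposition \ref{prop:paths} directly, so the main task is to construct a family of paths $\{P_{i,j}\}$ in $C_k$ whose maximum vertex-load $m$ is small enough to yield the bound. Since $|C_k| \asymp 6^k$, and we want $cut(C_k) \geq \frac{|C_k|^2}{8m} \succeq 2^k$, we need $m \preceq 6^{2k}/2^k = 18^k$; more precisely, to get the constant $\frac{2^k}{160}$ we will want $m$ of order $6^{2k}/2^k$ up to an explicit constant. First I would need to check the hypothesis $cut(C_k) \leq \frac{|C_k|}{4}$, but since we are proving a lower bound this is only an issue if the cut is already large, in which case we would be done; so I would handle that case separately at the outset, noting that if $cut(C_k) > \frac{|C_k|}{4}$ then certainly $cut(C_k) \geq \frac{2^k}{160}$ since $|C_k| \asymp 6^k$ dominates.

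The heart of the argument is the path construction. The natural idea is to route each pair $(i,j)$ along an ``L-shaped'' or axis-aligned path: travel horizontally along a row, then vertically along a column (or vice versa), staying inside $C_k$. Because $C_k$ is the union of complete lines, every vertex of $C_k$ lies on a complete horizontal line or a complete vertical line, and the complete lines form a grid-like structure indexed by the $2^k$ admissible horizontal positions and $2^k$ admissible vertical positions. I would therefore index the routing by these complete lines, connecting $i$ to $j$ by first moving to an intersection vertex of a complete horizontal and complete vertical line, then to $j$. The key estimate is to bound, for a fixed vertex $v$, the number of pairs $(i,j)$ whose path passes through $v$. A fixed $v$ lies on at most one horizontal and one vertical complete line, so the number of source/target endpoints that can route through $v$ via its row is controlled by the number of vertices on those lines (of order $3^k$) times the number of possible partners; balancing these counts is what produces $m \preceq 18^k$ up to constants.

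The hard part will be controlling the load $m$ precisely enough to extract the explicit constant $\frac{1}{160}$ rather than just the asymptotic $\succeq 2^k$. This requires a careful, uniform choice of the paths $P_{i,j}$ so that no single vertex is overused: the subtlety is that intersection vertices of complete lines lie in the ``heavily travelled'' core and are natural bottlenecks. I would address this by distributing the routing — for instance, for each pair choosing the turning vertex as a function of both endpoints so that the $\binom{|C_k|}{2}$ paths spread as evenly as possible across the $\asymp 4^k$ intersection vertices and the $\asymp 6^k$ total vertices. The counting then reduces to estimating, for a vertex on a horizontal complete line, how many paths use the horizontal segment through it (bounded by the number of vertices to its left times the number of valid targets) and symmetrically for vertical segments; summing these contributions and optimising the constants gives the stated $m$ and hence the clean bound $cut(C_k) \geq \frac{2^k}{160}$. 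The main obstacle is thus bookkeeping: ensuring the path family is genuinely load-balanced and that the worst-case vertex load is bounded by the claimed expression with the correct numerical constant.
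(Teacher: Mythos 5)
Your overall strategy coincides with the paper's: dispense with the hypothesis $cut(C_k)\leq |C_k|/4$, build an explicit family of axis-aligned paths running along complete lines, bound the maximal vertex load $m$ by a constant times $18^k$, and invoke Proposition \ref{prop:paths} with $|C_k|^2\geq 36^k$. Your opening dichotomy (if $cut(C_k)>|C_k|/4$ we are done, since $|C_k|\geq 6^k \geq 2^k$) is a fine substitute for the paper's observation that a straight line through the middle gives a small cut. But there is a genuine gap at the heart of the construction: a two-segment L-shaped path between $i=(x_1,y_1)$ and $j=(x_2,y_2)$ stays inside $C_k$ only when one endpoint lies on a complete vertical line and the other on a complete horizontal line, so that both segments run along complete lines and the turn happens at their intersection. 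A vertex of $C_k$ is only guaranteed to lie on \emph{one} complete line, and when both endpoints lie only on complete lines of the \emph{same} orientation --- say $x_1$ and $x_2$ are 1-free but neither $y_1$ nor $y_2$ is --- no two-segment path inside $C_k$ exists at all: you must cross from $V_{x_1}$ to $V_{x_2}$ along a horizontal crossbar $H_h$, and this crossbar must itself be a complete line, i.e.\ $h$ must be 1-free. Your proposal gestures at routing ``through an intersection vertex'' but never specifies how to pick the crossbar or verifies membership in $C_k$, and this is exactly the one nontrivial idea the lemma needs. The paper resolves it with the trick $h=x_1$: since $x_1$ is 1-free, $H_{x_1}$ is complete, giving the three-segment path $(x_1,y_1)\to(x_1,x_1)\to(x_2,x_1)\to(x_2,y_2)$ whose turning points are still determined by the endpoints, which is what makes the load count go through.

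Relatedly, the part you flag as the main difficulty --- load-balancing the paths evenly over the $\asymp 4^k$ intersection vertices --- is a red herring: no balancing is needed. With the deterministic routing above, for any vertex $v$ on any arrow of any path, one coordinate of $v$ is pinned to one of the four endpoint coordinates; the far endpoint then admits at most $6^k$ choices (its 1-free coordinate contributes $2^k$, the other $3^k$) and the remaining free coordinate at most $3^k$, so each case-and-arrow combination contributes at most $18^k$ paths through $v$. Summing over the $5$ case/arrow combinations and multiplying by $4$ for symmetry gives $m\leq 20\cdot 18^k$, whence $cut(C_k)\geq 36^k/(8\cdot 20\cdot 18^k)=2^k/160$. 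So the explicit constant falls out of crude union bounds, not from any careful equidistribution; what your writeup is missing is not bookkeeping but the completeness of the crossbar.
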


\begin{proof}
It is certainly the case that $cut(C_k) \leq |C_k|/4$ - for example, just cut it with a straight line through the middle. So Proposition 1 is applicable, and we only need to define and count the paths.

Call $x \in \mathbb{Z}$ \textit{1-free} if $x$ doesn't contain 1 as a digit in base 3. As in the proof of Lemma 1, $H_x$ or $V_x$ is complete in $\Gamma_k$ if and only if $x$ is 1-free. Hence, for any $(x, y) \in C_k$, at least one of $x$ and $y$ must be 1-free.

Take $i = (x_1, y_1)$, $j = (x_2, y_2)$. By symmetry, it is sufficient to consider two cases.
\vspace{0.25cm}\\
\textit{Case 1.} $x_1$ and $y_2$ are 1-free. Define $P_{i,j}$ as follows:

$$(x_1,y_1) \xrightarrow{[1]} (x_1,y_2) \xrightarrow{[2]} (x_2,y_2)$$
\\
\textit{Case 2.} $x_1$ and $x_2$ are 1-free. Let $P_{i,j}$ be
$$(x_1,y_1) \xrightarrow{[1]} (x_1,x_1) \xrightarrow{[2]} (x_2,x_1) \xrightarrow{[3]} (x_2,y_2)$$

Here the intermediate paths corresponding to each arrow are the most direct ones along a complete line which is possible because the fixed coordinate is 1-free.

Now, given $(x,y) \in C_k$, count the paths through it. Fix one of the two cases above and consider the paths of that type in which $(x, y)$ appears as part of one of the arrows. At least one of $x$ and $y$ is equal to one of $x_i$ or $y_i$, wlog, $x_1$. Then there are at most $6^k$ choices for $j$ and $3^k$ choices for $y_1$ giving $\leq 18^k$ paths in total. Additionally, there are 5 choices of case and arrow, times 4 for symmetry so $\leq 20 \cdot 18^k$ paths in total.

By Proposition 1,
\begin{equation*}
	\begin{split}
    	cut(C_k) & \geq \frac{|C_k|^2}{8m} \\
        & \geq \frac{36^k}{8 \cdot 20 \cdot 18^k} \\
        & = \frac{2^k}{160}
    \end{split}
\end{equation*}

\end{proof}

\subsection{Upper bound}

We'll need two lemmas.

\begin{lemma}
\label{lemma:pre-square}
Let $\Gamma$ be a subgraph of $\mathcal{S}$  on $q 6^k + r$ vertices for $0 \leq q < 6$ and $0 \leq r < 6^k$. Then there is $S \subseteq \Gamma$ such that $|S| \leq 24 \cdot 2^k$, $\Gamma - S$ has at most one connected component with more than $\frac{1}{2} |\Gamma|$ vertices which, if exists, is contained in a square of side length $3^k$.
\end{lemma}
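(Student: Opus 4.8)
The plan is to separate $\Gamma$ not by cutting along the carpet's own cell boundaries, but by deleting the vertices of $\Gamma$ lying on a \emph{shifted} grid of mesh $3^k$ whose offset is chosen, by a pigeonhole argument, to be cheap for this particular $\Gamma$. The key realisation is that one never needs to remove a whole line of $S$: the cutset is $\Gamma \cap (\text{grid})$, and we are free to translate the grid so that this intersection is small.

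Concretely, I would work one coordinate at a time. For each residue $c \in \{0,\dots,3^k-1\}$ consider the columns $\{x \equiv c \pmod{3^k}\}$; since every vertex of $\Gamma$ lies on exactly one such residue class, $\sum_{c} |\Gamma \cap \{x \equiv c\}| = |\Gamma| < 6^{k+1}$, so the average over the $3^k$ residues is less than $6^{k+1}/3^k = 6\cdot 2^k$. I pick a vertical offset $c_v$ attaining at most the average, and likewise a horizontal offset $c_h$ with $|\Gamma \cap \{y \equiv c_h \pmod{3^k}\}| \le 6\cdot 2^k$. Setting $S = \Gamma \cap (\{x \equiv c_v\} \cup \{y \equiv c_h\})$ then gives $|S| \le 12 \cdot 2^k \le 24\cdot 2^k$. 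Next I would check separation: as all edges of the ambient graph join lattice points at distance $1$, deleting every vertex with $x \equiv c_v$ kills one endpoint of each horizontal edge crossing those columns, so no edge of $\Gamma - S$ joins two different vertical strips $\{c_v + (a-1)3^k < x < c_v + a3^k\}$; the horizontal deletions do the same for rows. Hence each component of $\Gamma - S$ lies in a single open cell of the shifted grid, which is contained in a closed square of side $3^k$. Finally, two disjoint components cannot both exceed $\tfrac12|\Gamma|$, so at most one component is that large and it sits in such a square, which is exactly the claim.

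The only genuine obstacle is conceptual and sits at the very start. The carpet is \emph{thick} across its aligned cell boundaries $\{x \equiv 0 \pmod{3^k}\}$: between two adjacent present cells there are on the order of $3^k$ crossing edges (only the central ``all-ones'' offset $c = (3^k-1)/2$ is genuinely thin, meeting each $3^k$-strip in just $2^k$ points). So any attempt to isolate a \emph{fixed}, grid-aligned $3^k$-cell by brute force costs $\asymp 3^k$ and blows the budget $\preceq 2^k$ for large $k$. The resolution is precisely to abandon aligned cells: because we only ever delete $\Gamma \cap (\text{lines})$ and may translate the grid freely, the cost is governed by the average mass $|\Gamma|/3^k$ rather than by the width of the carpet, and the hypothesis $|\Gamma| < 6^{k+1}$ does the rest. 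I would also note that the constant $24$ is not tight — the averaging already yields $12\cdot 2^k$ — the slack presumably absorbing any rounding or a later insistence that the confining square be grid-aligned.
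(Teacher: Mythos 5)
Your proof is correct, and it takes a genuinely different route from the paper's, though both ultimately rest on the same counting trick (lines found by pigeonhole cost $\preceq |\Gamma|/3^k \leq 6\cdot 2^k$ vertices each, which is where the hypothesis $|\Gamma| < 6^{k+1}$ enters). The paper's proof is local: it fixes a median point $(m_x,m_y)$ of $\Gamma$ and pigeonholes within distance $3^k/2$ on each of the four sides of the median to find four cheap lines $V_{m_x+d_1}, V_{m_x-d_2}, H_{m_y+d_3}, H_{m_y-d_4}$; the median is then essential, because only the middle component is confined to a $3^k$-square, and the median property is what guarantees the unconfined outer components each have at most $\frac12|\Gamma|$ vertices. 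Your proof is global: deleting an entire residue class of columns $\{x \equiv c_v \pmod{3^k}\}$ and rows $\{y \equiv c_h \pmod{3^k}\}$, chosen by averaging over all $3^k$ offsets, confines \emph{every} component of $\Gamma - S$ to a grid cell of side at most $3^k$ (your edge-crossing argument is right, since edges of the ambient graph join points at $\ell^1$-distance $1$), so the ``at most one large component'' clause becomes the trivial remark that two disjoint components cannot both exceed half the vertices, and no median is needed at all. You thereby get a strictly stronger conclusion with a better constant ($12\cdot 2^k$ rather than $24\cdot 2^k$), at the price of a cutset spread over many lines rather than four; nothing downstream is harmed, since Corollary \ref{sierpinski_upper} only adds the two bounds, and your version also generalises to $S(d,b,A,m)$ just as readily ($2d$ directions, average mass $|\Gamma|/b^k \preceq N^k$ per residue class). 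Your closing remark about why aligned cell boundaries cannot be brute-forced is also accurate and well observed: the absolute thinness of the all-ones lines is irrelevant here (the paper only exploits it in the subsequent Lemma \ref{lemma:square}), and for the present lemma it is the mass of $\Gamma$ itself, via averaging over translates, that controls the cost.
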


\begin{proof}

\begin{figure}[h!]
\centering
\includegraphics[width=0.7\textwidth]{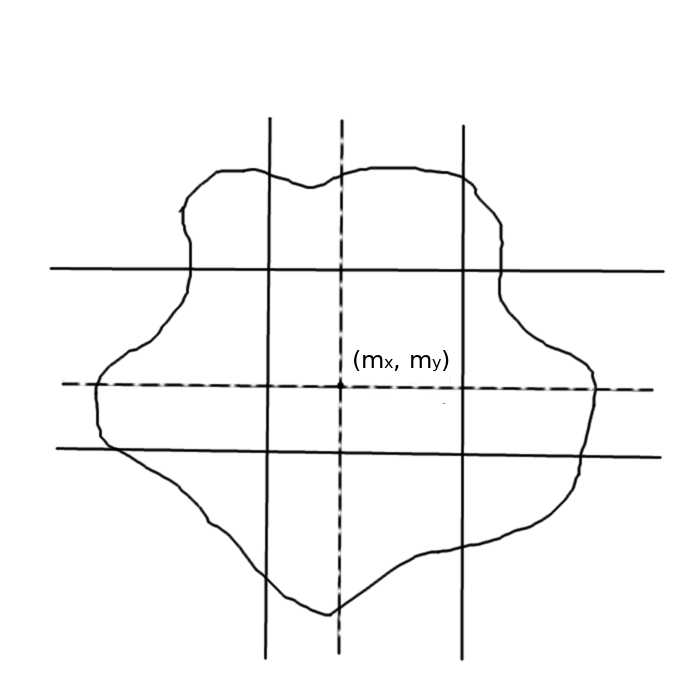}
\caption{Cutting an arbitrary subgraph of $\mathbb{Z}^2$ with straight lines: $(m_x, m_y)$ is the midpoint so that there are at most $n/2$ vertices either side of each dashed line. Thus, only the middle rectangular component can be too large.}
\label{fig:sketch}
\end{figure}

Take $m_x$ to be an integer such that $|\{(x,y) \in \Gamma : x < m_x\}| \leq \frac{1}{2}|\Gamma|$ and $|\{(x,y) \in \Gamma : x > m_x\}| \leq \frac{1}{2}|\Gamma|$ - this is either the median of the $x$-coordinates of vertices in $\Gamma$ or one of the two integers closest to it. Similarly, take $m_y$ to be such a median value with respect to the $y$-coordinates. By the pigeonhole principle, there is $d_1 \in [0,\frac{3^k}{2}]$ such that $|V_{m_x + d_1} \cap \Gamma| \leq 6 \cdot 2^k$. Similarly, there are $d_2, d_3, d_4 \in [0,\frac{3^k}{2}]$ such that each of $V_{m_x - d_2}, H_{m_y + d_3}, H_{m_y - d_4}$ has $\leq 6 \cdot 2^k$ vertices in $\Gamma$. Take $S$ to be the union of these 4 lines. By the choice of $(m_x, m_y)$, $S$ cuts $\Gamma$ into components of which only the middle one can be too large (see Figure \ref{fig:sketch}), and this component lies within a square of side length $3^k$.
\end{proof}

Thus, we can restrict our attention to cutting subgraphs of squares of side length $3^k$. The following lemma shows that this can always be done with about $2^k$ vertices.

\begin{lemma}
\label{lemma:square}
Given $k \in \mathbb{Z}_{\geq 0}$, let $Q_k$ denote a square in $\mathbb{Z}^2$ of side length $3^k$. Then for any subgraph $\Gamma$ of $S \cap Q_k$,
$$cut(\Gamma) \leq 12\cdot2^{k}$$
\end{lemma}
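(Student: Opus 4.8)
The plan is to induct on $k$. After translating, I may assume $Q_k=[0,3^k)^2$ is a block of the carpet, so that $S\cap Q_k$ is the full level-$k$ carpet graph. The engine of the argument is one quantitative fact: writing $c=\tfrac{3^k-1}{2}=\underbrace{1\cdots1}_{k}$ in base $3$, the central vertical line $V_c$ is an extremely cheap \emph{global} separator. Indeed $(c,y)\in S$ forces $y$ to avoid the digit $1$ at every position, i.e. $y$ is $1$-free, so $|V_c\cap S|=2^k$; and because every horizontal edge joins $x$ to $x\pm1$, deleting $V_c\cap\Gamma$ disconnects $\{x<c\}$ from $\{x>c\}$ entirely. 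The same holds for the central horizontal line $H_c$, and the two are disjoint in $S$ since their only crossing point $(c,c)$ has all digits $1$ and hence is not a carpet vertex. It is worth stressing why I reach for these lines rather than the obvious $3\times3$ block decomposition: adjacent level-$(k-1)$ sub-blocks are glued along a full grid line whose crossing edges form a matching of size $\Theta(3^{k-1})$, so isolating a single sub-block costs $\Theta(3^{k-1})$, which is hopelessly over budget. Global central-line cuts avoid this.

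The inductive step then runs as follows. Given $\Gamma\subseteq S\cap Q_k$ with $n=|\Gamma|$, first delete $(V_c\cup H_c)\cap\Gamma$, at cost $\le 2\cdot2^k$. Since this is a global cut it splits $\Gamma$ into the four quadrant pieces with \emph{no} edges between them, so no component can straddle the cross and nothing cut later inside one quadrant can reconnect the others — there is no leakage. If the largest quadrant has $\le n/2$ vertices we are done; otherwise exactly one quadrant $\Gamma'$ has $>n/2$ vertices (the other three together carry $<n/2$, hence are individually small), and it suffices to recurse on $\Gamma'$, cutting it into pieces of size $\le |\Gamma'|/2\le n/2$. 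The budget is then spent as $2\cdot2^k$ (the cross) plus the cost of the recursive call, and the target $12\cdot2^k$ is chosen so that this recursion closes.

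The main obstacle is the geometry of the heavy quadrant: it is \emph{not} a level-$(k-1)$ block. Its bounding box is $[0,c)^2$ with $c\approx\tfrac32\,3^{k-1}$, consisting of one corner sub-block together with the inner halves of two middle sub-blocks, so a bare appeal to the inductive hypothesis is not available. To close the induction I expect to have to do one of the following: (i) strengthen the statement to carpet intersections with lower-left boxes and argue that a bounded number of further global central-type cuts shrinks the bounding box by a definite factor, noting that a central separator of a box of side $s$ meets $S$ in $\asymp s^{\log2/\log3}$ vertices, so the successive costs form a geometric series summing to $\asymp 2^k$; or (ii) spend an additional $O(2^k)$ re-enclosing $\Gamma'$ in a genuine level-$(k-1)$ block before invoking the hypothesis. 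The delicate point in either route is that the cheap (many-$1$'s) cut positions cluster near the centre $c$ and thin out towards the edges, so for a quadrant whose mass sits near its outer corner one only has \emph{coarse} cheap separators nearby; one must check these still make definite progress. The remaining work is purely constant-chasing to make the geometric series land at $12\cdot2^k$. Finally, in the \emph{sparse} regime one can bypass all of this: when $n$ is small relative to $6^k$ the pigeonhole line-counting of Lemma~\ref{lemma:pre-square} directly produces balanced cut lines of size $O(2^k)$, and splitting the argument into a dense (central-line) and a sparse (pigeonhole) case is the cleanest way to keep the constant under control.
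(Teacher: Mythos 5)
Your proposal has the right raw ingredient --- lines whose base-$3$ expansion ends in many $1$'s meet $S$ in very few vertices --- but two linked gaps keep it from being a proof. First, the opening reduction is invalid: $S$ is not translation-invariant, so for an arbitrarily positioned square $Q_k$ you may not ``translate and assume $S\cap Q_k$ is the full level-$k$ carpet.'' A square of side $3^k$ containing, say, the point $3^{100}$ in its $x$-projection crosses aligned block boundaries at every scale up to $100$, and $S\cap Q_k$ is then not a subgraph of any single level-$k$ block. This is not a technicality, because the induction itself forces non-aligned squares on you: as you note yourself, your heavy quadrant has bounding box $[0,c)^2$ with $c\approx\tfrac32\cdot 3^{k-1}$, which is not a block. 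So both your base reduction and your inductive step rely on an alignment you do not have, and your sketch explicitly stops at this point (``I expect to have to do one of the following\ldots''), with neither route carried out. Route (ii) in particular cannot work as stated: the heavy quadrant is strictly larger than any level-$(k-1)$ block, so there is nothing to ``re-enclose'' it in.

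The paper closes exactly this gap by never anchoring to the centre of an aligned block. It states the lemma for an \emph{arbitrary} square and arbitrary subgraph $\Gamma$, and in the step from side $3^{k+1}$ to side $3^k$ it brackets the \emph{median} of $\Gamma$ rather than cutting at the carpet's centre: the lines $V_p$ with the last $k$ base-$3$ digits of $p$ all equal to $1$ occur with period exactly $3^k$, so one can choose $p_1\le m_x\le p_2$ with $p_2-p_1=3^k$, each such line meeting $S$ in at most $3\cdot 2^k$ points of any side-$3^{k+1}$ window; likewise horizontally. Four such lines cost $12\cdot 2^k$, leave only the middle component possibly large, and confine it to a (again non-aligned) square of side $3^k$, so the recursion $12\cdot 2^{k+1}=12\cdot 2^k+12\cdot 2^k$ closes with no case analysis. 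Note also that your worry that cheap cut positions ``cluster near the centre and thin out towards the edges'' is unfounded: at every scale $j$ the lines with last $j$ digits equal to $1$ are $3^j$-periodic, hence uniformly available near any position, which is precisely what makes median-bracketing work. Your dense/sparse split is likewise unnecessary --- the pigeonhole recursion alone would only reproduce the generic planar $\sqrt{n}$ bound, which exceeds $2^k$ for $n\asymp 6^k$ --- whereas replacing your central cross by the paper's median-bracketing with all-ones-tail lines closes your induction in one stroke.
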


\begin{proof}
The proof is by induction. The statement holds trivially for $k=0$. Now suppose it holds for some $k$. Let $\Gamma \leq S$ be a subgraph lying within some $Q_{k+1}$. Choose a midpoint $(m_x,m_y)$ of $\Gamma$ as in Lemma \ref{lemma:pre-square} and note that for any $p_1, p_2, q_1, q_2$ such that $p_1 \leq m_x \leq p_2$ and $q_1 \leq m_x \leq q_2$, removing $H_{p_1}$, $H_{p_2}$, $V_{q_1}$ and $V_{q_2}$ disconnects $\Gamma$ into components, of which only the middle one can be too large (see Figure \ref{fig:sketch}). We would like these lines to intersect at most $c 2^{k+1}$ vertices of $\Gamma$ (for a constant $c$) and make the middle component a subgraph of $Q_k$ to apply the induction hypothesis.
\newline

\begin{claim}
There exist $p_1$, $p_2$ such that $p_1\leq m_x \leq p_2$, $p_2 - p_1 = 3^k$ and $$|V_{p_i} \cap \Gamma| \leq 3 \cdot 2^{k}$$ 
\end{claim}
\begin{claimproof}
Consider $p$ of the form $p = \lambda 3^{k} + \sum\limits_{i=0}^{k-1} 3^{i}$ for $\lambda \not\in (0,1)$, i.e. the last k digits (base 3) of $p$ are all 1. Then $(p,y) \in S$ only if the last $k$ digits of $y$ are either 0 or 2. There are at most $3 \cdot 2^{k}$ choices for $y$ within $Q_{k+1}$, which gives $|V_p \cap \Gamma| \leq 3 \cdot 2^{k}$. Take $p_1$ to be the largest such $p$ with $p_1 \leq m_x$ and take $p_2 = p_1 + 3^k$. Note that $p_2$ has the required form and $p_2 \geq m_x$ by the maximality of $p_1$. The claim follows.
\end{claimproof}
\vspace{0.5cm}

\noindent Choose $p_1$, $p_2$ as in the claim and similarly choose $q_1$, $q_2$ for vertical cutting lines. Remove the 4 lines $H_{p_i}$ and $V_{q_i}$. As has been noted, the resultant graph has at most one component that can be too large - but by the choice of $p_i$ and $q_i$, it has to lie within a square of side length $3^k$, which can be cut with $\leq 12\cdot2^k$ vertices by the induction hypothesis. Hence 
\begin{equation*}
    \begin{split}
        cut(\Gamma) & \leq \sum\limits_{i=1,2}\left(|V_{p_i} \cap \Gamma|+|H_{q_i} \cap \Gamma|\right) + 12\cdot2^k \\
        & \leq 4 \cdot 3 \cdot 2^k + 12 \cdot 2^k \\
        & = 12 \cdot 2^{k+1}
    \end{split}
\end{equation*}
\end{proof}

\begin{corollary}
\label{sierpinski_upper}
$$sep_S(n) \preceq n^{\frac{\log{2}}{\log{6}}}$$
\end{corollary}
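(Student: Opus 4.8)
The plan is to combine Lemmas \ref{lemma:pre-square} and \ref{lemma:square}, feeding the single oversized component produced by the first into the second. Fix a subgraph $\Gamma \le S$ with $|\Gamma| = n$, and choose the integer $k$ with $6^k \le n < 6^{k+1}$. This choice does two jobs at once: it writes $n = q 6^k + r$ with $0 \le q < 6$ (in fact $q \ge 1$) and $0 \le r < 6^k$, which is exactly the form required by Lemma \ref{lemma:pre-square}, and it guarantees $2^k = (6^k)^{\log 2/\log 6} \le n^{\log 2/\log 6}$, which is the exponent we are aiming for.

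Next I would run the two cutting steps in sequence. Applying Lemma \ref{lemma:pre-square} produces a set $S_1 \subseteq \Gamma$ with $|S_1| \le 24 \cdot 2^k$ such that $\Gamma - S_1$ has at most one component $K$ exceeding $\tfrac12 |\Gamma|$ vertices, and this $K$ lies inside a square $Q_k$ of side length $3^k$. If no such $K$ exists we are already done, so assume it does. Since $K$ is a subgraph of $\Gamma \le S$ contained in $Q_k$, it is a subgraph of $S \cap Q_k$, so Lemma \ref{lemma:square} applies and yields $S_2 \subseteq K$ with $|S_2| \le 12 \cdot 2^k$ after whose removal every component of $K$ has at most $\tfrac12 |K| \le \tfrac12 |\Gamma|$ vertices.

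It then remains to assemble the estimate. After removing $S_1 \cup S_2$, every component of $\Gamma$ has at most $\tfrac12 |\Gamma|$ vertices: the components left by the first step were already below this threshold, and the formerly oversized $K$ has been broken into pieces of size at most $\tfrac12 |K| \le \tfrac12 |\Gamma|$ by the second. Hence $cut(\Gamma) \le |S_1| + |S_2| \le 36 \cdot 2^k \le 36\, n^{\log 2 / \log 6}$. Taking the supremum over all $\Gamma \le S$ with $|\Gamma| \le n$ gives $sep_S(n) \preceq n^{\log 2/\log 6}$, with $\epsilon$-independence of the profile letting us pass from the $\tfrac12$-threshold to the stated $sep_S$.

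I do not expect a genuine obstacle here, since the real work is already done inside the two lemmas; the only points that need care are bookkeeping. I would verify that the oversized component is a legitimate input to Lemma \ref{lemma:square} (that it sits inside a single $Q_k$ and inherits the $S$-structure), and check that cutting $K$ below half of its \emph{own} size suffices to bring it below half of $|\Gamma|$ — both of which follow immediately from $|K| \le |\Gamma|$.
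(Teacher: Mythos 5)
Your proposal is correct and follows essentially the same route as the paper: the paper's own proof of Corollary \ref{sierpinski_upper} is exactly this two-step assembly, writing $|H|=q6^k+r$, applying Lemma \ref{lemma:pre-square} and then Lemma \ref{lemma:square} to the single oversized component, and bounding $cut(H)\leq 24\cdot 2^k+12\cdot 2^k=36\cdot 2^k$. Your additional bookkeeping (choosing $k$ with $6^k\leq n<6^{k+1}$, checking $|K|\leq|\Gamma|$, and verifying $K\leq S\cap Q_k$) just makes explicit what the paper leaves implicit.
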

\begin{proof}
Take any subgraph of $H \leq \mathcal{S}$. There exist $q$, $k$ and $r$ such that $$|H| = q6^k+r$$ where $0 \leq q < 6$ and $0 \leq r < 6^k$. Then it follows from Lemmas \ref{lemma:pre-square} and \ref{lemma:square}, that $cut(H) \leq 24 \cdot 2^k + 12 \cdot 2^k = 36 \cdot 2^k$. The corollary follows.
\end{proof}

\section{Generalisations}
In this section we consider $S(d, b, A, m)$ as defined in the introduction. Recall that the vertex set of the $k$-th iteration is now taken to be
$$V_k = \big\{(x_1,...,x_d) \in \mathbb{Z}^d \cap [0, b^k)^{d}:\forall j \;\; \big|\{i: x_{i,j} \in A \}\big| \leq m \big\}$$
where $x_i = \sum\limits_{j=0}^{k-1} x_{i,j} \cdot b^{j}$, $A \subseteq [0,b-1]$ and $0 \leq m \leq d$. See Figure \ref{fig:gens} for some (2-dimensional) examples of $\Gamma_2$ for different $b$ and $A$.

\begin{figure}[h!]
\begin{subfigure}{0.5\textwidth}
\caption{$S(2,5,\{1,3\},1)$}
\includegraphics[width=\textwidth]{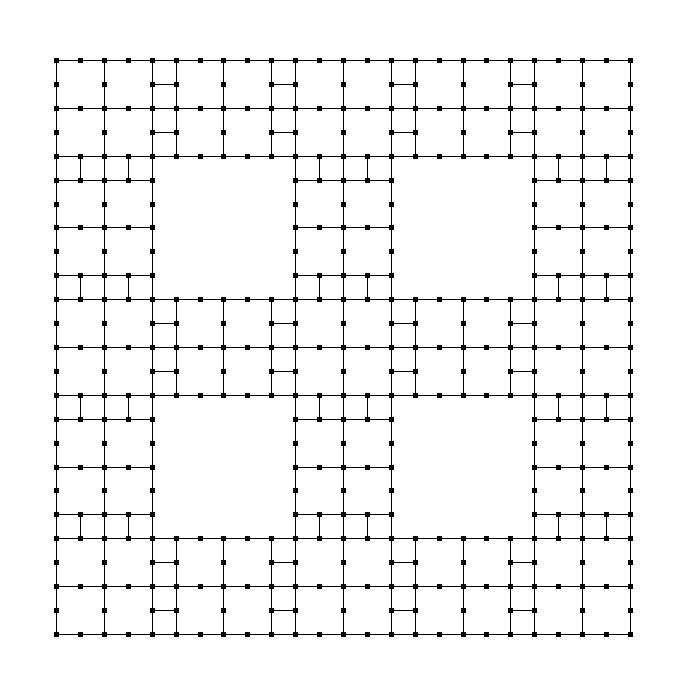}
\end{subfigure}
\begin{subfigure}{0.5\textwidth}
\caption{$S(2,5,\{0,3, 4\},1)$}
\includegraphics[width=\textwidth]
{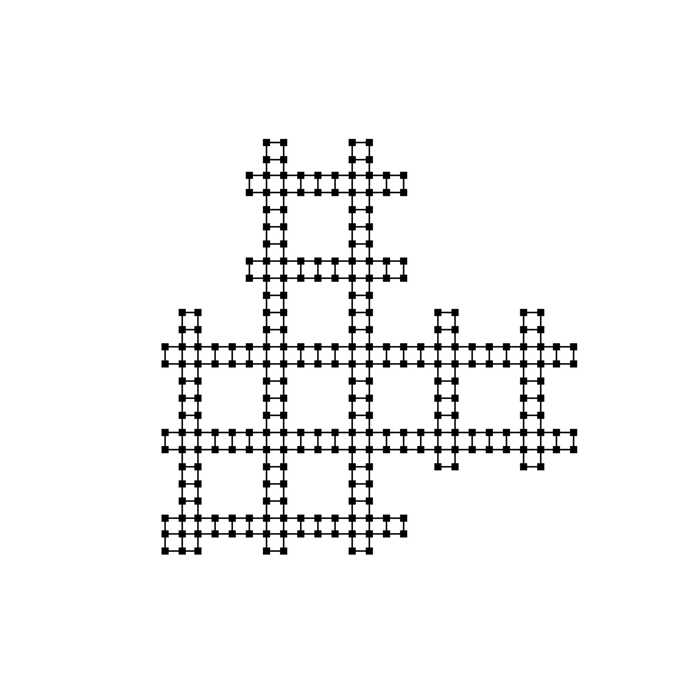}
\end{subfigure}
\begin{subfigure}{0.5\textwidth}
\caption{$S(2,7,\{1,3\},1)$}
\includegraphics[width=\textwidth]{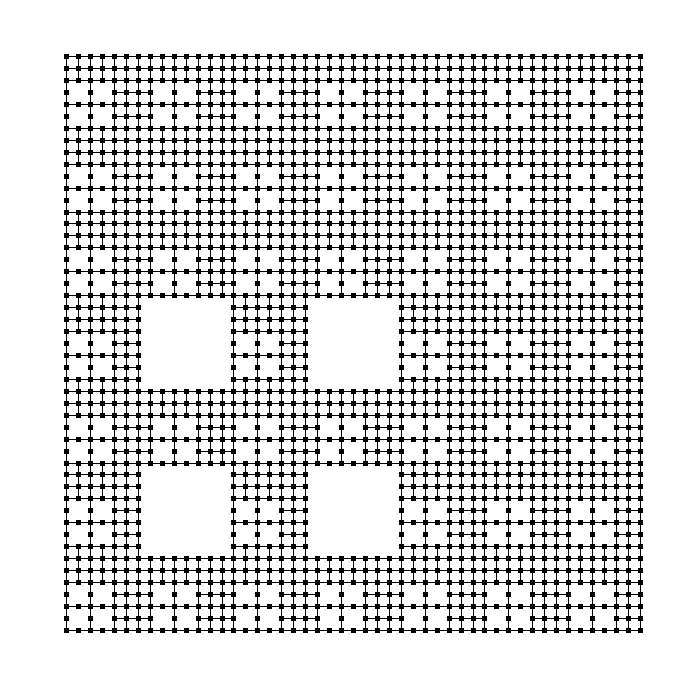}
\end{subfigure}
\begin{subfigure}{0.5\textwidth}
\caption{$S(2,8,\{1,3,6\},1)$}
\includegraphics[width=\textwidth]{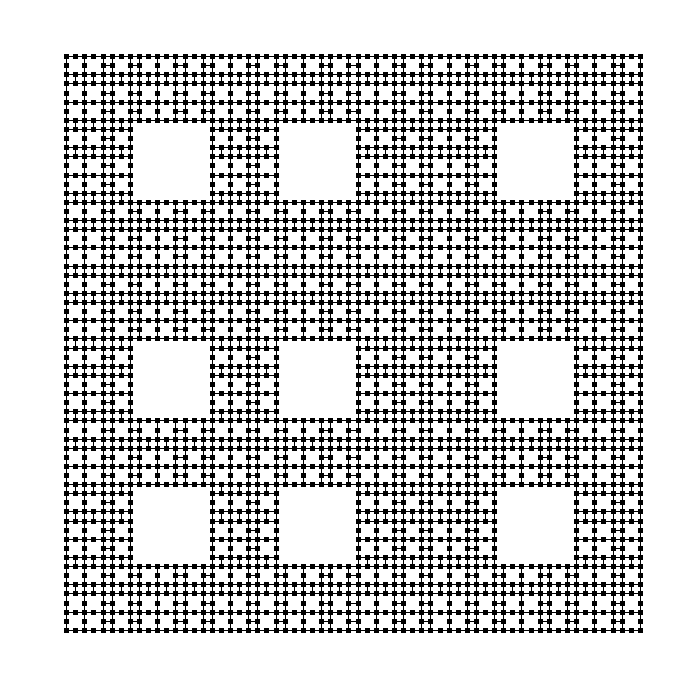}
\end{subfigure}
\caption{$\Gamma_2$ for some $S(d,b,A,m)$}
\label{fig:gens}
\end{figure}

Theorem \ref{thm:general} gives an upper bound for the separation profiles of these graphs, which is shown to be optimal in the case of $m=1$.

\general*

$N$ in the statement of the theorem may seem mysterious at first, but it is simply related to the size of the complete lines subgraphs of $S(d, b, A, m)$. To be precise, if we count the number of complete lines in the $k$-th iteration, we find that it's $\asymp N^k$, as proved in the following proposition:

\begin{prop}
\label{prop:complete_num}
Let $C_k$ be the complete lines subgraph of $\Gamma_k$ for $S(d, b, A, m)$. Then $|C_k| \asymp (N b)^k$.
\end{prop}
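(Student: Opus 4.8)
The plan is to identify the complete lines explicitly and count the vertices lying on them, in direct analogy with the proof of Lemma~\ref{complete_lemma}. First I would fix a direction, say direction $1$, and consider the line $L$ obtained by fixing the coordinates $x_2,\dots,x_d$ and letting $x_1$ range over $\mathbb Z$. The induced subgraph $\Gamma_k\cap L$ is a path on the admissible values of $x_1$, with edges joining consecutive admissible values, so it is connected precisely when the set of admissible $x_1$ is a full interval, i.e.\ when every $x_1\in[0,b^k)$ yields a vertex of $V_k$. Reading off the defining constraint digit-by-digit, the point $(x_1,x_2,\dots,x_d)$ lies in $V_k$ iff for every position $j$ at most $m$ of the digits $x_{1,j},\dots,x_{d,j}$ lie in $A$; with $x_2,\dots,x_d$ fixed this imposes a genuine restriction on $x_{1,j}$ exactly when $|\{i\geq 2: x_{i,j}\in A\}|=m$. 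Hence $L$ is complete iff for every $j$ we have $|\{i\geq 2: x_{i,j}\in A\}|\leq m-1$.

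Next I would count. Since the digit positions $j=0,\dots,k-1$ are independent, the number of complete lines in direction $1$ factorises as the $k$-th power of the number of ways to choose $d-1$ digits with at most $m-1$ of them in $A$, which is exactly
\[
\sum_{i=0}^{m-1}\binom{d-1}{i}|A|^i(b-|A|)^{d-1-i}=N.
\]
So there are $N^k$ complete lines in direction $1$, and by symmetry $N^k$ in each of the $d$ coordinate directions. Each complete line carries exactly $b^k$ vertices, since once the other coordinates are fixed so as to make $L$ complete the free coordinate ranges over all of $[0,b^k)$; moreover distinct lines in the same direction are vertex-disjoint. Thus the vertices lying on direction-$1$ complete lines number exactly $(Nb)^k$.

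To finish, the lower bound $|C_k|\geq (Nb)^k$ is immediate, since these $(Nb)^k$ vertices already belong to $C_k$. For the upper bound, every vertex of $C_k$ lies on a complete line in one of the $d$ directions, so $C_k$ is contained in a union of $d$ sets each of size $(Nb)^k$, giving $|C_k|\leq d\,(Nb)^k$. Combining the two bounds yields $|C_k|\asymp (Nb)^k$.

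The one place that needs care — and the only real content — is the completeness criterion in the first paragraph: one must check that a single forbidden digit genuinely disconnects the line rather than merely deleting an interior vertex (a gap at \emph{any} digit position, high or low, breaks the path), and that the correct threshold is $\leq m-1$ rather than $\leq m$. Everything after that is bookkeeping, and the fact that the per-position count is precisely the $N$ appearing in Theorem~\ref{thm:general} is what makes the statement come out clean.
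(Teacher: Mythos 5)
Your argument is essentially the paper's own proof: the same digit-by-digit completeness criterion, the same per-column count giving $N^k$ complete lines of length $b^k$ in each direction, plus the explicit union-over-$d$-directions bookkeeping that the paper compresses into ``working up to a constant.'' One caveat, which you flag as the only real content but state too strongly (the paper's proof asserts the same ``if and only if''): a gap at the \emph{top} digit position $j=k-1$ need not break the path when the allowed digits $\{0,\dots,b-1\}\setminus A$ form a consecutive run --- e.g.\ for $A=\{b-1\}$ a line whose only critical column is the top one has admissible set $[0,(b-1)b^{k-1})$, an interval, hence is complete without being full (for the carpet's $A=\{1\}$ the allowed set $\{0,2\}$ is non-consecutive, so no issue there). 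This failure of the ``only if'' direction inflates the number of complete lines by at most the constant factor $\frac{1}{N}\sum_{i=0}^{m}\binom{d-1}{i}|A|^i(b-|A|)^{d-1-i}$, so your lower bound is untouched, your upper bound degrades only in the constant, and the $\asymp$ conclusion stands.
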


\begin{proof}
Since we are working up to a constant and with subgraphs of $\mathbb{Z}^d$, it is sufficient to count the lines in one direction, for example, $e_1 = (1, 0, \ldots , 0)$. Then a line defined by fixing $x_2, \ldots , x_d$ is complete if and only if at most $m-1$ of these coordinates have a digit from $A$ in any given position.

Writing the coordinates one under another in base $b$, we have
\begin{gather*}
x_{2,1} x_{2,2} \ldots x_{2,k}\\
x_{3,1} x_{3,2} \ldots x_{3,k}\\
\ldots \\
x_{d,1} x_{d,2} \ldots x_{d,k}\\
\end{gather*}
Each column is allowed at most $m-1$ $A$-digits which gives
$$N = \sum\limits_{i=0}^{m-1} \binom{d-1}{i} |A|^i (b-|A|)^{d-i-1}$$
possible configurations, and since there are $k$ columns, the number of complete lines is $N^k$. Finally, each line is of length $b^k$ giving $|C_k| \asymp (N b)^k$.
\end{proof}

If one considers intersections of $\Gamma_k$ or $C_k$ with $(d-1)$-planes perpendicular to a vector in the standard basis of $\mathbb{Z}^d$, say, $e_1 = (1, 0, \ldots, 0)$, then $N^k$ is also the size of the smallest intersection. This is because such a plane intersects fewest vertices when all digits of $x_1$ are in $A$, so the points in the plane are precisely those with at most $m-1$ coordinates having an $A$-digit in the same position.

Intuitively, it would make sense that removing it is the ``best'' way of separating $C_k$, and if we could show this, we would also have the lower bound giving a stronger theorem with equality for all cases. We believe that this is indeed the case but the lower bound proofs for the carpet are much harder to extend for $m>1$.

Here we present a proof for $m=1$, generalising that of Lemma \ref{complete_lemma_alt}.

\begin{lemma}
\label{complete_lemma_m1}
$sep_{S(d,b,A,1)}(n) \succeq n^E$ where $E = \frac{(d-1)\log(b-|A|)}{(d-1)\log(b-|A|)+\log(b)}$
\end{lemma}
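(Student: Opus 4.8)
The plan is to mimic the alternative lower-bound proof for the Sierpiński carpet (Lemma~\ref{complete_lemma_alt}) by applying Proposition~\ref{prop:paths} to the complete-lines subgraph $C_k$ of $S(d,b,A,1)$. By Proposition~\ref{prop:complete_num} we know $|C_k| \asymp (Nb)^k$ where, in the case $m=1$, the sum defining $N$ collapses to its single $i=0$ term, so $N = (b-|A|)^{d-1}$. Thus $|C_k| \asymp ((b-|A|)^{d-1} b)^k$. The goal is to show $\mathrm{cut}(C_k) \succeq N^k = (b-|A|)^{(d-1)k}$, since then $\mathrm{cut}(C_k) \succeq |C_k|^{E}$ with
$$E = \frac{\log N}{\log N + \log b} = \frac{(d-1)\log(b-|A|)}{(d-1)\log(b-|A|) + \log b},$$
which, feeding into the definition of the separation profile with $n \asymp |C_k|$, yields $sep_{S(d,b,A,1)}(n) \succeq n^E$.

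First I would verify that Proposition~\ref{prop:paths} applies, i.e.\ that $\mathrm{cut}(C_k) \leq |C_k|/4$; as in the carpet case this follows by cutting with a single hyperplane through the middle. Next I would set up the system of canonical paths $P_{i,j}$ between every pair $i,j \in C_k$, generalising the two-case and three-case constructions. The key structural fact is that in the $m=1$ regime, a line in direction $e_\ell$ obtained by fixing the other $d-1$ coordinates is \emph{complete} precisely when none of those fixed coordinates uses an $A$-digit in any position, i.e.\ each is ``$A$-free''; and every vertex of $C_k$ must have at least one $A$-free coordinate. To route between $i=(x_1,\dots,x_d)$ and $j=(y_1,\dots,y_d)$ I would change one coordinate at a time, always moving along a complete line: if the coordinate being changed is held fixed in the other $d-1$ slots and those are $A$-free, the move is legal. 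I would show that one can always order the coordinate changes so that at each step the fixed coordinates are $A$-free, possibly routing through an intermediate ``hub'' point all of whose coordinates equal a single $A$-free value (the analogue of passing through $(x_1,x_1)$ in Case~2), so that the path length is $O(d)$ edges' worth of straight segments.

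The crux is the congestion bound: I must show that the maximal number $m$ of paths $P_{i,j}$ passing through any fixed vertex $v$ is $\preceq (b^k)^{2(d-1)+1} = b^{(2d-1)k}$, or more precisely small enough that $|C_k|^2/(8m) \succeq N^k$. Counting as in Lemma~\ref{complete_lemma_alt}: if $v$ lies on a segment of $P_{i,j}$, then $v$ shares all-but-one coordinate with one of the waypoints, so the number of free endpoints and free intermediate coordinates determining such a path is bounded by a fixed power of $b^k$. With $|C_k|^2 \asymp (Nb)^{2k} = N^{2k} b^{2k}$, I would need the congestion to be $\preceq N^{2k} b^{2k}/N^k = N^k b^{2k}$ up to constants; tracking which coordinates are free versus determined should give exactly this balance, with the $N^k = (b-|A|)^{(d-1)k}$ factor surviving because the free intermediate coordinates of a complete line are forced to be $A$-free. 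The main obstacle — and the reason the authors restrict to $m=1$ — is precisely controlling this congestion count while ensuring every routing segment stays on a \emph{complete} line; the bookkeeping of which coordinates are $A$-free at each stage, and arranging a valid order of single-coordinate moves through a hub, is where the argument is most delicate and where a naive generalisation to $m>1$ breaks down.
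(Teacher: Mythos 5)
Your overall strategy is exactly the paper's: apply Proposition~\ref{prop:paths} to the complete-lines subgraph $C_k$, with $|C_k|\asymp (Nb)^k$ and $N=(b-|A|)^{d-1}$, and show the canonical-path congestion is $\preceq N^k b^{2k}$ so that $cut(C_k)\succeq N^k$. However, the one concrete routing device you commit to --- a ``hub'' \emph{all} of whose coordinates equal a single $A$-free value --- is the wrong generalisation of $(x_1,x_1)$, and it breaks precisely the congestion count you correctly identify as the crux. Suppose the path from $x=(x_1,\dots,x_d)$ passes through $(a,\dots,a)$ with $a=x_{i_2}$. A vertex $z$ one step before the hub has $d-1$ coordinates equal to $a$, so it reveals only $a$ and one further coordinate of $x$; the remaining $d-2$ coordinates of $x$ (one of which may carry $A$-digits) and all of $y$ are free, giving $\asymp (b-|A|)^{(d-3)k}b^k\cdot (b-|A|)^{(d-1)k}b^k=(b-|A|)^{(2d-4)k}b^{2k}$ paths through $z$. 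Since $2d-4>d-1$ for $d\geq 4$, Proposition~\ref{prop:paths} then yields only $cut(C_k)\succeq (b-|A|)^{2k}$, which falls short of the needed $(b-|A|)^{(d-1)k}$; your hub happens to coincide with the correct construction at $d=2$ and is borderline at $d=3$, but the lemma is claimed for all $d$.

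The paper's proof keeps the invariant your sketch loses: \emph{every} intermediate point shares at least $d-1$ coordinates with the endpoints $x$ or $y$, with those shared coordinates $A$-free. Since $m=1$, each endpoint has at most one ``bad'' coordinate ($x_{i_1}$, resp.\ $y_{j_1}$); one first overwrites the single slot $i_1$ with a duplicate of one good coordinate $x_{i_2}$ (this move runs along a complete line in direction $e_{i_1}$ because all other coordinates of $x$ are $A$-free), then swaps the remaining coordinates to those of $y$ one at a time, skipping slots $i_1$ and $j_1$, and finally restores $y_{i_1}$ and then $y_{j_1}$. At no stage does more than one coordinate stray from the coordinate values of $x$ and $y$, so a vertex $z$ on any path determines $d-1$ of the $2d$ coordinates of the pair $(x,y)$; of the $d+1$ unspecified coordinates at most two (namely $x_{i_1}$ and $y_{j_1}$) may contain $A$-digits, giving congestion $\preceq (b-|A|)^{(d-1)k}b^{2k}$, exactly your target. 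Your heuristic ``$v$ shares all-but-one coordinate with one of the waypoints'' is true of the hub itself but useless there, because an all-equal hub determines almost nothing about $(x,y)$: the sharing must be with the endpoints, and securing that is the missing step in your proposal.
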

\begin{proof}
Consider the subgraph $C_k$ of complete lines in $\Gamma_k$, as before. A line in direction $e_i$ (a unit vector with 1 as its $i$-th coordinate) is complete if and only if all coordinates, except the $i$-th one, are $A$-free (in other words, they don't contain a digit from $A$). Here, $N = (b-|A|)^{d-1}$, and Proposition \ref{prop:complete_num} gives $|C_k| \asymp b^{k} (b-|A|)^{(d-1)k}$.

We'll prove that $cut(C_k) \succeq (b-|A|)^{(d-1)k}$.
Take $x, y \in C_k$ with $x = (x_1, \ldots, x_d)$ and $y = (y_1, \ldots, y_d)$. Since $x$ lies on a complete line, at most one of its coordinates contains a digit from $A$, say $x_{i_1}$ (if there is no such coordinate take $i_1 = 1$) and let $x_{i_2}$ be any other coordinate. Similarly define $y_{j_1}$. Then we can build $P_{x,y}$ in essentially the same way as for Lemma \ref{complete_lemma_alt}, moving along a complete line at every step:

\begin{enumerate}
\item Replace the $i_1$-th coordinate of $x$ with $x_{i_2}$.

\item Replace every coordinate of $x$ with the corresponding coordinate of $y$, skipping the $i_1$-th and ${j_1}$-th coordinates. Note that at the end of every replacement, all coordinates are $A$-free so that we are free to move in any direction.

\item Replace the $i_1$-th coordinate with $y_{i_1}$. If $i_1 = j_1$, we are done, otherwise do the next step.

\item Replace the $j_1$-th coordinate with $y_{j_1}$
\end{enumerate}

At every stage at least $d-1$ of an intermediate point's coordinates are the same as those of $x$ or $y$ and additionally $A$-free.
Now, as before, given a point $z$, count the number of possible $P_{x,y}$ containing it. At least $d-1$ of $z$'s coordinates are shared with either $x$ or $y$, but there are $d+1$ more coordinates to specify and at most 2 of them can contain an $A$-digit. This gives $(b-|A|)^{(d-1)k} b^{2k}$ choices. By Proposition \ref{prop:paths},
$$ cut(C_k) \geq \frac{|C_k|^2}{8(b-|A|)^{(d-1)k} b^{2k}} = \frac{(b-A)^{(d-1)k}}{8}$$
as required.
\end{proof}

By contrast, the upper bound proof extends to all cases in a straight-forward way.

\begin{lemma}
\label{gen_upper}
Let $N = \sum\limits_{i=0}^{m-1} \binom{d-1}{i} |A|^i (b-|A|)^{d-i-1}$ and $E = \frac{\log(N)}{\log(N)+\log(b)} $. Then $$sep_{S(d,A,b,m)}(n) \preceq n^E$$
\end{lemma}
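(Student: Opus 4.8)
The plan is to transplant the two-step argument behind Corollary~\ref{sierpinski_upper} to the general setting, systematically replacing the side length $3^k$ by $b^k$, the cut size $2^k$ by $N^k$, and the total size $6^k$ by $(Nb)^k$. The only structural input that needs generalising is the sparse-line bound hidden in the Claim of Lemma~\ref{lemma:square}. I would first record its analogue: fix a coordinate, say $x_1$, and a value $c$ all of whose last $k$ base-$b$ digits lie in $A$. Then on the hyperplane $\{x_1=c\}$, a vertex of $S$ lying in a cube of side $b^{k+1}$ must, at each of the last $k$ positions, use at most $m-1$ of its remaining $d-1$ coordinates for an $A$-digit; by the definition of $N$ this allows exactly $N$ configurations per position, and at most $b^{d-1}$ configurations at the top position, so the hyperplane meets at most $b^{d-1}N^k$ vertices. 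This is precisely the observation made after Proposition~\ref{prop:complete_num}, and for $(d,b,A,m)=(2,3,\{1\},1)$ it reproduces the bound $3\cdot 2^k$.

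Armed with this, I would generalise the two lemmas in turn. For the analogue of Lemma~\ref{lemma:pre-square}, take a coordinatewise median $(m_1,\dots,m_d)$ of an arbitrary $\Gamma\le S$ and, in each direction $e_i$, apply the pigeonhole principle to the $b^k/2$ hyperplanes lying within distance $b^k/2$ of $m_i$ on one side; these are disjoint, so their total $\Gamma$-mass is at most $|\Gamma|$, and writing $|\Gamma|=q(Nb)^k+r$ with $q<Nb$ gives $|\Gamma|<(Nb)^{k+1}$, whence one of them meets at most $2Nb\cdot N^k$ vertices. Removing the resulting $2d$ hyperplanes splits $\Gamma$ into at most $3^d$ regions; every region other than the central one lies inside a half-space $\{x_i<m_i\}$ or $\{x_i>m_i\}$ and so carries at most $|\Gamma|/2$ vertices by the choice of median, while the central region is confined to a cube of side $b^k$. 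Thus the unique possibly-large component is isolated into such a cube at a cost of $\mathcal O(N^k)$.

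For the analogue of Lemma~\ref{lemma:square} I would induct on $k$, the base case $k=0$ (a single vertex) being trivial. Given $\Gamma\subseteq S\cap Q_{k+1}$, choose medians as above, and in each direction select two hyperplanes $\{x_i=p_i^-\}$, $\{x_i=p_i^+\}$ whose last $k$ digits all lie in $A$, which agree in those digits and differ only in the top digit, so that $p_i^+-p_i^-=b^k$ and $p_i^-\le m_i\le p_i^+$ (the candidate positions are spaced exactly $b^k$ apart, so $p_i^-$ is just the largest one not exceeding $m_i$). Each such hyperplane meets at most $b^{d-1}N^k$ vertices by the count above, the $2d$ of them confine the central component to a cube $Q_k$, and the induction hypothesis cuts that cube with at most $C_dN^k$ vertices. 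Summing yields $cut(\Gamma)\le(2db^{d-1}+C_d)N^k$, so the induction closes as soon as $C_d(N-1)\ge 2db^{d-1}$, i.e.\ with $C_d=\tfrac{2db^{d-1}}{N-1}$; for the carpet this is $C_2=12$, matching Lemma~\ref{lemma:square}. Finally, for an arbitrary $H\le S$ I would choose $k$ with $(Nb)^k\le|H|<(Nb)^{k+1}$ and combine the two lemmas to obtain $cut(H)\le C\,N^k$; since $N^k=\big((Nb)^k\big)^{\log N/\log(Nb)}\le|H|^{E}$ with $E=\tfrac{\log N}{\log N+\log b}$, this gives $sep_{S(d,A,b,m)}(n)\preceq n^E$.

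I expect the difficulty here to be bookkeeping rather than conceptual. The two points that require care are verifying that deleting $2d$ hyperplanes really leaves only the central box large -- which rests entirely on each outer box sitting in a single median half-space -- and checking that structural sparse hyperplanes of spacing $b^k$ can always be located around the medians inside an arbitrarily placed cube $Q_{k+1}$, including the boundary values of the top digit. One hypothesis that is invisible in the carpet case but essential here is $N\ge 2$: when $N=1$ (for instance $b-|A|=1$ when $m=1$) the recursion constant $C_d$ is undefined, but then $E=0$ and the fractal is degenerate, so that case should simply be excluded.
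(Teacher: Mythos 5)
Your proposal is correct and follows essentially the same route as the paper: a median-plus-pigeonhole reduction confining the only possibly-large component to a $d$-cube of side $b^k$, followed by induction on $k$ using the sparse hyperplanes $x_i=\lambda b^k+\sum_{j<k}a_jb^j$ with $a_j\in A$, spaced exactly $b^k$ apart, which is precisely the paper's argument. Your explicit constant bookkeeping (e.g.\ $C_d=2db^{d-1}/(N-1)$, recovering $12$ for the carpet) and the observation that the recursion needs $N\geq 2$ are details the paper leaves implicit, but they do not change the approach.
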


\begin{proof}
Let $\Gamma$ be a subgraph of $S(d,A,b,m)$ on $q (b N)^k + r$ vertices for some $0 \leq q < bN$ and $0 \leq r < (bN)^k$. We define the midpoint of $\Gamma$ in a similar way to Lemma \ref{lemma:pre-square}. Exactly as with the Sierpi\'{n}ski carpet, $\Gamma$ can be cut by removing $(d-1)$-planes of $\preceq N^k$ vertices around the midpoint so that only the component containing the midpoint can be too large, and it will lie in a $d$-cube of side length $b^k$ (as can be arranged by the pigeonhole principle, using $d$ planes with at most $b N^{k+1}$ vertices each). Now it remains to show that the cut of subgraphs in such a cube is of order $N^k$, and the upper bound will follow as before.

Fix $i$ and consider $(d-1)$-planes given by
$$x_i = \lambda b^k + \sum\limits_{i=0}^{k-1} a_i b^{i}$$
for $a_i \in A$. The size of these planes is at most $\frac{b^{d-1}}{(b-|A|)^{d-1}} N^k$ (almost as in the sparsest plane, but one digit may be allowed to be in $A$) and the distance between any two is at most $b^k$. So, exactly as before, given a subgraph of a $d$-cube of side length $b^{k+1}$, find its midpoint $(m_1, \ldots, m_d)$ and for each $i$ take the largest $x_i \leq m_i$ of the form as above and let $y_i = x_i + b^k$ noting that $y_i \geq m_i$. Removing the planes corresponding to ${x_i, y_i}$ cuts the $d$-cube with at most $\frac{2 d b^{d-1}}{(b-|A|)^{d-1}} N^k$ vertices, which is a constant multiple of $N^k$ as required.

This completes the proof.

\end{proof}

Combining this with Lemma \ref{gen_upper} immediately gives Theorem \ref{thm:general}.

\end{document}